\DeclareMathOperator{\Spec}{Spec}
\DeclareMathOperator{\End}{End}
\DeclareMathOperator{\Ker}{Ker}
\DeclareMathOperator{\Ima}{Im}
\DeclareMathOperator{\Trd}{Trd}
\DeclareMathOperator{\Id}{Id}
\DeclareMathOperator{\rdim}{rdim}
\DeclareMathOperator{\sign}{sign}
\DeclareMathOperator{\Pol}{Pol}
\newcommand{\isom}{\stackrel{\sim}{\rightarrow}}
\newcommand{\Isom}{\stackrel{\sim}{\longrightarrow}}
\newcommand{\Z}{\mathbb{Z}}
\newcommand{\N}{\mathbb{N}}
\newcommand{\pfis}[1]{\langle\!\langle #1\rangle\!\rangle}
\newcommand{\To}{\longrightarrow}
\newcommand{\fdiag}[1]{\langle #1\rangle}
\newcommand{\ens}[2]{\{ #1\,|\, #2\}}
\newcommand{\tld}{\widetilde}
\newcommand{\eps}{\varepsilon}
\newcommand{\CBrh}[1][K]{\mathbf{Br}_h(#1)}
\newcommand{\Zd}{\Z/2\Z}
\renewcommand{\phi}{\varphi}
\renewcommand{\bar}{\overline}
\newtheorem{thm}{Theorem}[section]
\newtheorem{prop}[thm]{Proposition}
\newtheorem{coro}[thm]{Corollary}
\newtheorem{lem}[thm]{Lemma}
\newtheorem{defi}[thm]{Definition}
\theoremstyle{definition}
\newtheorem{rem}[thm]{Remark}
\newtheorem{ex}[thm]{Example}
\author{Nicolas Garrel}
\title{An Artin-Schreier-type theory for signatures of hermitian forms over involutions}
\date{}
\begin{document}

\maketitle

\section*{Introduction}

The theory of orderings of a field, as initiated by Artin and Schreier in \cite{AS},
has strong ties with the structure of the Witt ring $W(K)$ of the field, through
the study of signatures. Precisely, there is a canonical bijection between:
\begin{itemize}
\item the set of orderings $X(K)$ of a field $K$;
\item the signature maps $\sign_P: W(K)\to \Z$ at those ordering $P\in X(K)$;
\item the prime ideals of $W(K)$ or residual characteristic $0$.
\end{itemize}
The signature map $\sign_P$ sends a $1$-dimensional form $\langle a\rangle$ to
$\pm 1$ according to the $P$-sign of $a$, and the prime ideal $I_P(K)$ corresponding
to $P\in X(K)$ is simply the kernel of $\sign_P$ (we refer to \cite{Lam} for all
the details).
\\

A natural direction in which to generalize this framework is that of
$\eps$-hermitian forms over central simple algebras with involution,
with the Witt groups $W(A,\sigma)$ replacing $W(K)$. This was
initiated in \cite{BP}: given an ordering $P\in X(K)$, we can define
through a choice of Morita equivalence (see section \ref{sec_morita})
two possible group morphisms $W^\eps(A,\sigma)\to \Z$, which differ
by a sign and are still called signature maps at $P$.

One should note that this coincides with earlier notions of signatures of involutions
(developed for involutions of the first kind in in \cite{LT}, see also
\cite[A.11]{BOI}): only the absolute value of the signature of an involution
is properly defined, and for an $\eps$-hermitian form $h$, this applied
to the adjoint involution $\sigma_h$ does give the absolute value of the
signature of $h$, for either choice of sign. This sign ambiguity for
involutions can be seen as a manifestation of the fact that an involution
is only a descent of a bilinear form \emph{up to similitude}.

We would also like to point out that these definitions makes perfect sense for
symplectic involutions, following the usual pattern of symplectic involutions
being just as interesting as orthogonal ones even though in the split case they
are all hyperbolic. The equal status of orthogonal/symplectic involutions, or
equivalently hermitian/anti-hermitian forms, is an important feature of the
framework we develop in this article.
\\

In \cite{AU} and \cite{AU2}, Astier and Unger investigate those signature
maps, and determine criteria to coherently make those sign choices for all
orderings simultaneously. Their key result is
that one can find ``reference forms'' $h\in W(A,\sigma)$ such that
for any ordering $P$ of $K$, either all elements of $W(A,\sigma)$ have a zero
signature at $P$ (which is independent of the choice ambiguity), or $h$ has
a non-zero signature at $P$. One can then decide that $h$ should have a nonnegative
signature at all orderings, and this determines a pairing $W(A,\sigma)\times X(P)\to \Z$,
such that for any $x\in W(A,\sigma)$, its total signature map $X(K)\to \Z$ is
continuous. Of course, this depends on the choice of a reference form.

This can be pushed further to investigate signatures of $\eps$-hermitian forms
over algebras with involution. If $(V,h)$ is an $\eps$-hermitian form over $(A,\sigma)$,
it corresponds through hermitian Morita theory (see section \ref{sec_morita}) to
some $(B,\tau)$, which has a signature well-defined up to a sign at each ordering of
the base field. The question now becomes: given $(A,\sigma)$, how coherently can we
choose these signs, when both the hermitian form and the ordering vary?

If we fix the ordering, then this was addressed in \cite{BP}: 
essentially, a choice of hermitian Morita equivalence for $(A,\sigma)$ over
the real closure of the base field at the given ordering determines signs
simultaneously for all $\eps$-hermitian forms, and a different choice flips
all the signs. So there are two opposite coherent choices when fixing the
ordering, each giving an additive map $W^\eps(A,\sigma)\to \Z$.

In \cite{AU} and \cite{AU2}, Astier and Unger investigate how to coherently
choose this additive map for all orderings simultaneously. Their key result is
that one can find ``reference forms'' $h\in W(A,\sigma)$ such that
for any ordering $P$ of $K$, either all elements of $W(A,\sigma)$ have a zero
signature at $P$ (which is independent of the choice ambiguity), or $h$ has
a non-zero signature at $P$. One can then decide that $h$ should have a nonnegative
signature at all orderings, and this determines a pairing $W(A,\sigma)\times X(P)\to \Z$,
such that for any $x\in W(A,\sigma)$, its total signature map $X(K)\to \Z$ is
continuous. Of course, this depends on the choice of a reference form.
\\

It would be desirable to wrap all those investigations in an Artin-Schreier-type
theory. Unfortunately, the Witt group $W(A,\sigma)$ does not carry a natural ring
structure, which prevents a straightforward generalization in this setting. This
is somewhat circumvented in \cite{AU2}, where the authors introduce an ad hoc class of
morphisms $W(A,\sigma)\to \Z$ which are to be interpreted as ``ring morphisms'', and
do correspond to signature maps, but this is not fully satisfactory.

In \cite{G}, we introduced a ring $\tld{W}(A,\sigma)$, called the \emph{mixed Witt ring}
of an algebra with involution of the first kind. As a group, it is simply
\[ \tld{W}(A,\sigma) = W(K)\oplus W(A,\sigma) \oplus W^{-1}(A,\sigma). \]
The thesis of this article is that this ring constitutes an appropriate
framework to transpose the Artin-Schreier theory. Our main results are the following:

\begin{thm}
  For any ordering $P\in X(K)$, there are exactly two ring morphisms
  $\tld{W}(A,\sigma)\to \Z$ extending the signature $W(K)\to \Z$, differing
  by a sign on $W^{\pm 1}(A,\sigma)$. The set $\tld{X}(A,\sigma)$ of their kernels
  is exactly the set of prime ideals of $\tld{W}(A,\sigma)$ of residual
  characteristic $0$.
\end{thm}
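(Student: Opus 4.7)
The plan splits into three parts: exhibiting two ring morphisms, uniqueness of the extension up to the sign flip, and identifying their kernels with all prime ideals of residual characteristic $0$. The key structural fact throughout is that $\tld{W}(A,\sigma)$ carries a natural $\Z/2\Z$-grading, with $W(K)$ in degree $0$ and $W(A,\sigma)\oplus W^{-1}(A,\sigma)$ in degree $1$: a product of two elements of $W^{\pm 1}(A,\sigma)$ lands in $W(K)$. The grading automorphism negating the degree-$1$ part is then a ring automorphism, which already accounts for the sign ambiguity in the statement.

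For existence, I would extend scalars along $K\to K_P$ to the real closure and apply the hermitian Morita equivalence of Section~\ref{sec_morita} to identify $(A,\sigma)\otimes_K K_P$ with one of the standard involutions over $K_P$. Each of the two Morita equivalences induces a ring morphism from $\tld{W}(A,\sigma)$ into the mixed Witt ring of a standard algebra over $K_P$, and composing with the signature of its underlying quadratic part produces a ring morphism $\tld{W}(A,\sigma)\to\Z$ extending $\sign_P$. The two Morita choices differ by the grading automorphism, hence the resulting morphisms differ by a sign on $W^{\pm 1}(A,\sigma)$.

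For uniqueness, let $s$ be any such morphism. For $x\in W^{\pm 1}(A,\sigma)$ the element $x^2$ lies in $W(K)$, so $s(x)^2=\sign_P(x^2)$ determines $s(x)$ up to sign. If $s$ vanishes on the whole degree-$1$ part we are done; otherwise I would pick a reference form $h$ with $s(h)\neq 0$, so that for any $x$ the relation $s(xh)=s(x)s(h)$ (with $xh$ in a component where $s$ is already pinned down) fixes $s(x)$. Existence of a suitable reference form when $s$ is not trivially zero on $W^{\pm 1}$ should follow from the machinery of Astier--Unger in \cite{AU,AU2}, transposed to the mixed Witt setting. This leaves exactly one global sign choice, hence exactly two morphisms.

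For the classification of primes, take $\p\subset\tld{W}(A,\sigma)$ prime of residual characteristic $0$. The contraction $\p\cap W(K)$ is a prime of $W(K)$ of residual characteristic $0$, hence equals $I_P(K)$ for a unique $P\in X(K)$ by classical Artin--Schreier theory. Writing $s^+$ for one of the two morphisms above, every $x\in W^{\pm 1}(A,\sigma)$ has image $\bar x$ in $\tld{W}(A,\sigma)/\p$ satisfying $\bar x^2=s^+(x)^2\in\Z$, so $\bar x\in\{\pm s^+(x)\}\subset\Z$; the quotient is thus $\Z$, and $\p$ is the kernel of a ring morphism to $\Z$, which must be one of $s^\pm$. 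The main obstacle I expect is the coherence of the sign choice in uniqueness, which requires the reference-form machinery of Astier--Unger to be available and compatible with the multiplication of $\tld{W}(A,\sigma)$; a closely related subtlety is ensuring that the pointwise sign choices in the prime-ideal argument assemble into a single ring morphism rather than a merely set-theoretic map.
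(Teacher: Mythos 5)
Your overall architecture matches the paper's: existence via scalar extension to the real closure $K_P$ followed by hermitian Morita reduction to a standard $(D_P,\theta_P)$ and then a ring morphism down to $\Z$; uniqueness via $s(x)^2=\sign_P(x^2)$ plus multiplicativity against a reference form whose existence is the Astier--Unger result; and prime classification by contracting $\p$ to $W(K)$, recognizing $I_P(K)$, and showing the quotient domain is forced to be $\Z$. The $\Z/2\Z$-grading observation is the same as the paper's use of the standard automorphism $(\fdiag{-1}_\sigma)_*$, and your flagged dependency on \cite{AU,AU2} is exactly Lemma~\ref{lem_h_local} (= \cite[thm~6.1]{AU}), which the paper also takes as an external input.

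There is, however, a real gap in your existence step. ``Composing with the signature of its underlying quadratic part'' does not produce a ring morphism $\tld{W}(D_P,\theta_P)\to\Z$. If you mean projecting onto the $W(K_P)$-summand and then applying $\sign$: that map kills the degree-$1$ part, but a product $x_1 y_1$ of two degree-$1$ elements lands in $W(K_P)$ with generally nonzero signature (e.g.\ $\fdiag{1}_\Id\cdot\fdiag{1}_\Id=\fdiag{1}$ in $\tld{W}(K_P,\Id)=W(K_P)[\Zd]$, or $\fdiag{1}_\gamma\cdot\fdiag{1}_\gamma=\fdiag{2}\,n_{\mathbb{H}_{K_P}}=8$ over the quaternions), so multiplicativity fails. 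What the paper does instead is construct a \emph{retraction}: a ring morphism $\rho:\tld{W}(D_P,\theta_P)\to W(K_P)$ restricting to the identity on $W(K_P)$, so $\rho$ must take nonzero values on degree-$1$ elements. In the split case $\rho$ is the augmentation of $W(K_P)[\Zd]$; in the quaternionic case (Proposition~\ref{prop_retrac_sympl}) the construction is genuinely nontrivial --- one sets $\rho(h)$ so that $q_h=2\rho(h)$, uses torsion-freeness of $W(K_P)$ to make this well-defined, and uses the Pythagorean property of $K_P$ (pure quaternions square to minus a square) to verify multiplicativity on $W^{-1}(\mathbb{H}_{K_P},\gamma)$. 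This retraction is where the substantive work of the existence proof lives, and your proposal as stated skips it entirely (and the naive replacement would be wrong). The uniqueness and prime-ideal arguments are fine as sketched.
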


Our point of view is that $\tld{X}(A,\sigma)$ is the natural replacement
of $X(K)$ in this context. A coherent choice of signs as alluded above
amounts exactly to a choice of section of the obvious map $\tld{X}(A,\sigma)\to X(K)$
(what we call a \emph{polarization} in section \ref{sec_polar}).

\begin{thm}
  If we endow $\tld{X}(A,\sigma)$ with the Zariski topology, the natural map
  $\tld{X}(A,\sigma)\to X(K)$ is a topological double cover,
  and the polarizations defined by Astier and Unger in terms of reference forms
  are exactly the continuous sections.
\end{thm}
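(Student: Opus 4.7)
The plan is to split the statement into two parts: first establishing that $\pi: \tld{X}(A,\sigma)\to X(K)$ is a topological double cover, and then characterizing the continuous sections.

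For the double cover, Theorem 1 already provides cardinality-2 fibers, so the task reduces to producing local trivializations. Given $P\in X(K)$ with extensions $\tld{P}_1\neq \tld{P}_2$, they must differ on some $x\in W^{\pm 1}(A,\sigma)$, say with $\tld{P}_1(x)>0>\tld{P}_2(x)$. The key observation is that $x^2\in W(K)$, because $W^{\pm 1}(A,\sigma)$ sits in the odd degree of the $\Zd$-grading of $\tld{W}(A,\sigma)$, so $U:=H(x^2)\subset X(K)$ is a Zariski open neighbourhood of $P$. Over $U$ we have the decomposition $\pi^{-1}(U)=\tld{H}(x)\sqcup \tld{H}(-x)$ with $\tld{H}(x):=\{\tld{Q}:\tld{Q}(x)>0\}$, each component mapping bijectively onto $U$ by Theorem 1. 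To verify that $\pi|_{\tld{H}(x)}$ is a homeomorphism I would use the multiplication trick: for any $z\in\tld{W}(A,\sigma)$, since $\tld{Q}_+(xz)=\tld{Q}_+(x)\tld{Q}_+(z)$ with $\tld{Q}_+(x)>0$, the image $\pi(\tld{H}(x)\cap\tld{H}(z))$ equals $U\cap H(z)$ if $z\in W(K)$, and $U\cap H(xz)$ if $z\in W^{\pm 1}(A,\sigma)$ (with $xz\in W(K)$ by the grading)---both Zariski open. Hence $\pi$ is a topological double cover.

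For the easy direction of the second part, given a reference form $h$, the polarization $s_h(P):=$ the unique extension of $P$ with $\tld{P}(h)>0$ is continuous by the same multiplication trick: $s_h^{-1}(\tld{H}(z))$ equals $H(z)$ when $z\in W(K)$ and $H(hz)$ when $z\in W^{\pm 1}(A,\sigma)$, both Zariski open.

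The main obstacle is the converse: every continuous section arises from a reference form. The plan is to exploit that $X(K)$ is a Boolean space (compact, Hausdorff, totally disconnected) under the Zariski topology. A continuous section $s$ restricts, on each trivialization set $U$ constructed above, to one of the two local branches $Q\mapsto\tld{Q}_\pm$, and by compactness and clopen refinement I may assume the cover $\{U_{x_i}\}$ is finite and pairwise disjoint. On each clopen piece, the required sign choice can be encoded by multiplying $x_i$ by a suitable element of $W(K)^\times$ whose signature realizes the prescribed pattern (such elements exist by classical Pfister-form theory over real fields). Patching then assembles a single $h$ with $s=s_h$; the reference-form property follows because $s$ is defined everywhere. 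The delicate step is the patching itself---ensuring $h$ lies in a single graded piece with nowhere-vanishing signature---and I expect this to ultimately rely on invoking Astier and Unger's original existence theorem for reference forms as a foundational input.
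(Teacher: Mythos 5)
Your treatment of the double cover is substantially the same as the paper's: both rest on the grading fact that $W_\eps(A,\sigma)\cdot W_\eps(A,\sigma)\subset W(K)$ (so $x^2\in W(K)$ and $H(x^2)$ is a Harrison-open neighbourhood), both use Lemma~\ref{lem_h_local} to guarantee such trivializing forms exist around every ordering, and your multiplication trick to see that $\pi$ restricted to $\tld{H}(x)$ is a homeomorphism is essentially the paper's openness lemma for $\pi$. One small omission: the fact that $\tld{P}_1(x)$ and $\tld{P}_2(x)$ are genuinely \emph{opposite} rather than merely different is Proposition~\ref{prop_signatures}, not a free WLOG, though nothing breaks.

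The genuine gap is in the converse, that every continuous section comes from a reference form. Your plan --- refine to a finite disjoint clopen cover, adjust signs piecewise by scaling each $x_i$ by a Pfister-type element of $W(K)$, then patch --- does not carry through as stated: summing the adjusted $\lambda_i x_i$ over the pieces can produce cancellation, so there is no guarantee the resulting $h$ has nonvanishing signature everywhere, let alone the prescribed sign pattern. You are essentially re-proving \cite[3.1]{AU2} and \cite[3.3]{AU2} from scratch, and you acknowledge the need to ``ultimately rely on invoking Astier and Unger's original existence theorem'' --- but the existence theorem alone is not enough; what you actually need is that the set of principal polarizations is a \emph{transitive $G_c$-set}, which is \cite[3.3]{AU2}. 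The paper routes around the patching entirely with a group-action argument: $\Pol^c(A,\sigma)$ is a simply transitive $G_c$-set (because $G_c$ is precisely the group of continuous deck transformations and acts simply transitively on all polarizations), the principal polarizations form a nonempty $G_c$-invariant subset that is itself transitive under $G_c$ by \cite[3.3]{AU2}, and a nonempty transitive sub-$G_c$-set of a simply transitive $G_c$-set is the whole thing. That structural observation is what your proposal is missing, and without it the patching step is a real hole rather than a formality.
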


Note that while we are focusing here on involutions of the first kind, the case of
involutions of the second kind has also received attention (in \cite{AQM} for involutions,
and \cite{AU} and \cite{AU2} for hermtian forms), and we intend to give it
a similar treatment in a future article.

\section*{Notations}

In all this article, $K$ is a fixed field of characteristic not $2$.

Its Witt ring is $W(K)$, with fundamental ideal $I(K)$. We will
not distinguish between quadratic forms and their Witt classes. The diagonal
quadratic forms are denoted $\fdiag{a_1,\dots,a_n}$, and the
$n$-fold Pfister forms $\pfis{a_1,\dots,a_n}$. Recall that in particular
the $n$-fold Pfister form $\pfis{-1,\dots,-1}$ is the element $2^n\in W(K)$.

The map $e_2: I^2(K)\to H^2(K,\mu_2)$ is the Clifford invariant map.
It sends a 2-fold Pfister form $\pfis{a,b}$ to the Brauer class $(a,b)$.
In particular, if $Q$ is a quaternion algebra, its norm form $n_Q$ can
be characterized by the fact that $e_2(n_Q)$ is the Brauer class $[Q]$ of $Q$.

An algebra with involution $(A,\sigma)$ over $K$ is a central simple
$K$-algebra of finite dimension, and $\sigma$ is an involution of the first kind,
so $\sigma$ restricts to the identity on $K$. Recall that $\sigma$ can
be either orthogonal or symplectic. The reduced trace map of $A$
is $\Trd_A: A\to K$.

We write $W^\eps(A,\sigma)$ for the Witt group of regular $\eps$-hermitian
forms over $(A,\sigma)$, with $\eps\in \{+1,-1\}$. We often write
$W(A,\sigma)$ for $W^1(A,\sigma)$. If $a_1,\dots,a_n\in A^\times$ are
$\eps$-symmetric with respect to $\sigma$ (meaning that $\sigma(a_i)=\eps a_i$),
then we write $\fdiag{a_1,\dots,a_n}_\sigma\in W^\eps(A,\sigma)$ for the
corresponding diagonal $\eps$-hermitian form. In particular, we have a
canonical element $\fdiag{1}_\sigma\in W(A,\sigma)$.

If $\sigma$ is orthogonal then we set
$W_\eps(A,\sigma) = W^\eps(A,\sigma)$; if $\sigma$ is symplectic then
$W_\eps(A,\sigma) = W^{-\eps}(A,\sigma)$. 

For any commutative ring $R$, $\Spec_0(R)\subset \Spec(R)$ is the
subset of prime ideals over $(0)\in \Spec(\Z)$. We view it as a topological
subspace.

\section{Orderings and signatures over fields}\label{sec_field}

We start with a brief overview of the theory over fields, mainly to fix
notation, and we refer to \cite{Lam} for proofs.

Recall that a field $K$ is formally real if $-1$ is not a sum of squares in
$K$; it is real closed if in addition no algebraic extension of $K$
is formally real.  
An ordering on a field $K$ is a subgroup $P\subset K^*$ of index 2 which
is stable under addition and does not contain $-1$. We write $X(K)$ for the
set of all orderings of $K$. If $P\in X(K)$ we say that
$(K,P)$ is an ordered field, and we write $\sign_P:K^\times\to \{\pm 1\}$
the morphism with kernel $P$. Then $\sign_P(a)$ is called the $P$-sign
(or the sign if no confusion is possible) of $a\in K^*$.
We can then speak of $P$-positive and $P$-negative elements.

An extension of an ordered field $(K,P)$ is an ordered field $(L,Q)$
such that $L/K$ is an extension with $P=Q\cap K$. If $L/K$ is algebraic
and $(L,Q)$ is real closed, then $(L,Q)$ is called a real closure of $K$.

\begin{prop}
  A field $K$ admits an ordering iff it is formally real; a real closed
  field admits a unique ordering. Any ordered field $(K,P)$ admits a real closure
  $K_P$, unique up to a unique $K$-isomorphism.
\end{prop}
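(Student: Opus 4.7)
The plan is to separate the three assertions and handle each with the standard Artin--Schreier-style arguments. For the equivalence between formal reality and the existence of an ordering, the easy direction is that if $P\in X(K)$ then every nonzero square lies in $P$ (since one of $a,-a$ does, and $P$ is multiplicative), so sums of squares lie in $P$, forcing $-1$ not to be a sum of squares. For the converse, I would introduce \emph{preorderings} (subsets of $K$ containing all sums of squares, closed under addition and multiplication, not containing $-1$). The set of sums of squares is such a preordering by formal reality, so Zorn's lemma produces a maximal preordering $T$. The key step is to check that $T\cup(-T)=K$: if $a\in K^\times$ satisfied $a\notin T$ and $-a\notin T$, then $T+aT$ and $T-aT$ would both be strict preordering extensions of $T$, and a direct computation using $-1\notin T$ shows at least one avoids $-1$, contradicting maximality. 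Hence $T$ is an ordering.

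For the uniqueness of the ordering on a real closed field $K$, any $P\in X(K)$ must contain all nonzero squares, so it is enough to show $[K^\times:K^{\times 2}]=2$. Given $a\in K^\times$ not a square, $K(\sqrt a)$ is a proper algebraic extension, hence not formally real; writing $-1=\sum(u_i+v_i\sqrt a)^2$ and expanding (the mixed term $\sqrt a$ must have coefficient zero) yields $-1=\sum u_i^2+a\sum v_i^2$. Since $-1$ is not itself a sum of squares, $\sum v_i^2\neq 0$, and rearranging shows $-a$ is a sum of squares, hence belongs to every potential ordering. Therefore every element is either a square or minus a square, which pins down the positive cone.

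For the real closure of $(K,P)$, existence is obtained by Zorn's lemma applied to the poset of algebraic ordered extensions $(L,Q)$ of $(K,P)$ inside a fixed $\bar K$: a maximal element $K_P$ must be real closed, for otherwise one could build a proper algebraic formally real extension (using $P$ and the criterion from the first assertion) and order it compatibly, contradicting maximality. Uniqueness up to unique $K$-isomorphism is the delicate point: I would rely on Sturm's theorem to show that, for any $\alpha\in K_P$ with minimal polynomial $f\in K[X]$ and any $P$-interval with endpoints in $K$, the number of roots of $f$ in that interval is an invariant depending only on $(K,P)$. This both constructs an isomorphism between two real closures (match roots of irreducible polynomials in the correct $P$-order) and rigidifies it, since any $K$-isomorphism $K_P\to K_P'$ automatically preserves squares and hence orderings, so each $\alpha$ must be sent to the root of $f$ in the prescribed position.

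The main obstacle is the uniqueness of the $K$-isomorphism between two real closures: existence of \emph{some} isomorphism is a transfinite going-up argument, but showing it is unique requires the full Sturm-type count of real roots to verify that ordering-preservation leaves no freedom in the choice. The two preceding assertions feed into this: the first guarantees that algebraic extensions can be ordered whenever possible, while the second ensures that any $K$-embedding between real closures automatically transports the unique ordering correctly, which is what reduces the uniqueness of the isomorphism to the rigidity of ordered algebraic extensions.
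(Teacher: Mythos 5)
The paper itself does not prove this proposition: Section~1 is introduced with the remark that the classical theory is reviewed only to fix notation and that proofs are deferred to \cite{Lam}. Your sketch is the standard Artin--Schreier argument found there, and it is correct in outline. One small point of wording: when $a\notin T$ and $-a\notin T$, the sets $T+aT$ and $T-aT$ are \emph{not} automatically preorderings --- avoiding $-1$ is part of the definition and is exactly what has to be verified. The computation (from $-1\in T+aT$ and $-1\in T-aT$ one derives $-1\in T$, using that $K$ is formally real so nonzero elements of $T$ have nonzero sums) shows that at least one of the two avoids $-1$; that one is then a preordering strictly containing $T$, contradicting maximality. The rest of the sketch (sums of squares lie in every ordering; $[K^\times:K^{\times 2}]=2$ over a real closed field via the quadratic extension $K(\sqrt a)$; Zorn for existence of a real closure; Sturm counts for uniqueness of the $K$-embedding) matches the treatment in the cited reference.
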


\begin{prop}
  If $L$ is real closed, then there is a (unique) ring isomorphism
  between $W(L)$ and $\Z$, sending $\fdiag{a}$ to its sign relative to the
  unique ordering of $L$, for all $a\in L^*$.
\end{prop}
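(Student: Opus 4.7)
The plan is to construct the isomorphism explicitly and verify the three properties: well-definedness as a ring map, bijectivity, and uniqueness. The crucial feature of $L$ being real closed that I will use repeatedly is that every positive element is a square, so $L^\times/L^{\times 2} = \{1,-1\}$, and every diagonal form $\fdiag{a_1,\dots,a_n}$ is isometric (by scaling each basis vector by $|a_i|^{-1/2}$) to a form $\fdiag{\eps_1,\dots,\eps_n}$ with $\eps_i \in \{1,-1\}$.

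First I would define the candidate map $\phi: W(L)\to \Z$ on diagonal generators by $\phi(\fdiag{a_1,\dots,a_n})=\sum_i \sign_P(a_i)$, where $P$ is the unique ordering of $L$. To see this descends to the Witt ring I need two things: it should vanish on the hyperbolic plane (immediate, since $\sign_P(1)+\sign_P(-1)=0$), and it should only depend on the isometry class. For the latter, using the normal form above, any form is isometric to some $p\fdiag{1}\oplus q\fdiag{-1}$, and the invariance of $p-q$ is Sylvester's law of inertia, which over a real closed field can be proved by a short Witt-cancellation argument (or deduced from Witt's theorem, assumed as background). Multiplicativity of $\phi$ follows from $\fdiag{a}\cdot\fdiag{b}=\fdiag{ab}$ and $\sign_P(ab)=\sign_P(a)\sign_P(b)$; additivity is clear.

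For bijectivity, surjectivity holds because $\phi(\fdiag{1})=1$. For injectivity, pick any class and put it in its normal form $p\fdiag{1}\oplus q\fdiag{-1}$; if $\phi=p-q=0$ then $p=q$ and the form equals $p$ copies of $\fdiag{1,-1}$, which is hyperbolic, hence zero in $W(L)$.

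Finally, for uniqueness let $\psi: W(L)\to \Z$ be any ring morphism. Then $\psi(\fdiag{1})=1$ forces $\psi(\fdiag{a})^2=\psi(\fdiag{a^2})=1$, so $\psi(\fdiag{a})\in\{\pm 1\}$; the hyperbolic relation $\fdiag{1}+\fdiag{-1}=0$ forces $\psi(\fdiag{-1})=-1$; and then multiplicativity fixes $\psi(\fdiag{a})=\sign_P(a)$ on every $a$, since $a$ is either $b^2$ (where $\psi(\fdiag{a})=1$) or $-b^2$ (where $\psi(\fdiag{a})=-1$). The main obstacle in the whole argument is really the classical Sylvester/Witt-invariance step, which is what makes $p-q$ a well-defined isometry invariant; everything else is essentially bookkeeping once this is granted.
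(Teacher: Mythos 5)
The paper itself does not prove this proposition; at the beginning of section~\ref{sec_field} it explicitly defers all proofs of the reviewed material to Lam's textbook. Your argument is correct and is the standard one found there: over a real closed field every form reduces to $p\fdiag{1}\oplus q\fdiag{-1}$ since $L^\times/L^{\times 2}=\{\pm 1\}$, Sylvester's law of inertia (via Witt cancellation over a formally real field) makes $p-q$ a well-defined isometry invariant, and ring-morphism properties, bijectivity, and uniqueness then follow by direct computation exactly as you wrote.
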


Thus for any ordering $P$ on a field $K$, there is a unique ring morphism
\[ \sign_P: W(K)\To W(K_P) \Isom \Z, \]
called the \emph{signature} of $K$ at $P$, which extends the
$P$-sign map on $K^*$ (meaning that $\sign_P(\fdiag{a})=\sign_P(a)$).
For any $p\in \N$ that is either $0$ or a prime number, we write
\[ \sign_{P,p}: W(K) \xrightarrow{\sign_P} \Z \To \Z/p\Z. \]
Then we set
\[ I_{P,p}(K) = \Ker(\sign_{P,p}). \]
We also write $I_P(K)=I_{P,0}(K)$.

\begin{prop}\label{prop_spec_w}
  For any ordering $P$ on $K$, we have $I_{P,2}(K)=I(K)$.
  Furthermore,
  \[ \Spec(W(K)) = \{I(K)\} \coprod_{P\in X(K)} \ens{I_{P,p}(K)}{\text{$p$ odd or $0$}}. \]
  In particular, there is a canonical identification between $X(K)$
  and $\Spec_0(W(K))$.
\end{prop}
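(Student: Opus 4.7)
The plan is to verify that each ideal listed on the right-hand side is prime, and then to show conversely that every prime ideal of $W(K)$ arises in this way by analyzing the structure of its residue ring. For the easy direction, the equality $I_{P,2}(K)=I(K)$ follows from $\sign_P(\langle a\rangle)=\pm 1\equiv 1\pmod 2$, which makes $\sign_{P,2}$ coincide with the dimension-mod-$2$ map $W(K)\to\Z/2\Z$, whose kernel is classically $I(K)$. In particular $I(K)$ is prime with residue ring $\Z/2\Z$, and each $I_{P,p}(K)$ for $p$ odd or $0$ is prime as the kernel of a ring morphism to the integral domain $\Z/p\Z$ or $\Z$.

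For the converse, I would exploit the fundamental identity $\langle a\rangle^2=\langle a^2\rangle=\langle 1\rangle=1$ in $W(K)$, which shows that every generator $\langle a\rangle$ satisfies $X^2-1=0$. For any prime $\mathfrak{p}$, the image of each $\langle a\rangle$ in the integral domain $W(K)/\mathfrak{p}$ therefore lies in $\{\pm 1\}\subset\Z\cdot 1$; since these images additively generate $W(K)/\mathfrak{p}$, the residue ring is contained in, and hence equal to, $\Z\cdot 1$, so it is either $\Z$ or $\Z/p\Z$ for some prime $p$. This yields a ring morphism $\chi:W(K)\to R$ with $R\in\{\Z\}\cup\{\Z/p\Z:p\text{ prime}\}$ and $\chi(\langle a\rangle)\in\{\pm 1\}$. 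If $R=\Z/2\Z$ then necessarily $\chi(\langle a\rangle)=1$ for all $a$, so $\chi$ is the dimension-mod-$2$ map and $\mathfrak{p}=I(K)$.

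Otherwise I would set $P:=\{a\in K^\times:\chi(\langle a\rangle)=1\}$, which is a multiplicative subgroup of index $2$ in $K^\times$; it avoids $-1$ because $\langle 1,-1\rangle=0$ in $W(K)$ forces $\chi(\langle -1\rangle)=-1$ (using $2\neq 0$ in $R$). The main obstacle is to show that $P$ is stable under addition, for which I would invoke the classical Witt identity $\langle a,b\rangle=\langle a+b,\,ab(a+b)\rangle$, valid when $a+b\neq 0$: if $a,b\in P$, applying $\chi$ yields $2=\chi(\langle a+b\rangle)+\chi(\langle ab(a+b)\rangle)$ in $R$, and since $2\neq 0$ and each summand is $\pm 1$, both must equal $1$, so $a+b\in P$ (the case $a+b=0$ cannot occur, as it would force $\chi(\langle -a\rangle)=-1$, contradicting $b\in P$). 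Then $P$ is an ordering and $\chi=\sign_{P,p}$ by construction, giving $\mathfrak{p}=I_{P,p}(K)$; distinct pairs $(P,p)$ give distinct ideals, since the residue ring determines $p$ and $P$ is recovered as $\{a:\langle a\rangle-1\in\mathfrak{p}\}$. The final bijection $X(K)\simeq\Spec_0(W(K))$ is then the restriction of this classification to residue characteristic $0$.
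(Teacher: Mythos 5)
Your proof is correct and is essentially the classical Harrison--Lorenz--Leicht argument that the paper cites to Lam rather than reproducing: you exploit $\fdiag{a}^2=1$ to force the residue ring onto its prime subring, split into the characteristic-$2$ case (giving $I(K)$) and the remaining case where $P=\ens{a}{\chi(\fdiag{a})=1}$ is shown to be an ordering via the binary Witt relation $\fdiag{a,b}=\fdiag{a+b,ab(a+b)}$, using $2\neq 0$ in $R$ to pin down the signs. All the steps, including the treatment of the edge case $a+b=0$ and the recovery of $(P,p)$ from the ideal, are sound.
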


\begin{rem}
  The topology given on $X(K)$ through this identification is the
  so-called \emph{Harrison topology}. It is well-known that $X(K)$
  is compact Hausdorff and totally disconnected, and the embedding
  $X(K)\to \{\pm 1\}^{K^*}$ given by $P\mapsto \sign_P$ is actually
  a closed immersion.
\end{rem}

\section{Hermitian Morita theory and the mixed Witt ring}\label{sec_morita}

In this section we review the necessary material from \cite{G}, starting with
some hermitian Morita theory.

If $(A,\sigma)$ and $(B,\tau)$ are algebras with involution over $K$,
a hermitian Morita equivalence from $(B,\tau)$ to $(A,\sigma)$ is
a $B$-$A$-bimodule $V$ such that $B\simeq \End_A(V)$, endowed with
an $\eps$-hermitian form $h:V\times V\to A$ such that $\tau$ is the
adjoint involution of $h$.

Such an equivalence exists if and only if $A$ and $B$ are Brauer-equivalent,
and in that case $V$ is unique up to bimodule isomorphism, while
$h$ is unique up to multiplication by $\fdiag{\lambda}$ for any
$\lambda\in K^\times$. Then $\eps=1$ if $\sigma$ and $\tau$ have the
same type, and $\eps=-1$ if they have opposite type.

Any hermitian Morita equivalence $(V,h)$ induces a $W(K)$-module isomorphism
$W_\eps(B,\tau)\isom W_\eps(A,\sigma)$ for each $\eps\in \{\pm 1\}$, such that
$\fdiag{1}_\tau$ is sent to $h$.

In \cite{G} we define a category $\CBrh$, called the hermitian Brauer 2-group
of $K$, such that the objects are the algebras with involution $(A,\sigma)$,
and the morphisms $(B,\tau)\to (A,\sigma)$ (which are all invertible)
are the hermitian Morita equivalences.
From what was explained above, if we consider the $W(K)$-modules
\[ \tld{W}_\eps(A,\sigma) = W(K) \oplus W_\eps(A,\sigma) \]
and
\[ \tld{W}(A,\sigma) = W(K) \oplus W_1(A,\sigma) \oplus W_{-1}(A,\sigma), \]
then $\tld{W}$ and $\tld{W}_\eps$ are functors from $\CBrh$ to the category
of $W(K)$-modules. Note that morally $\tld{W}(A,\sigma)$ should have four factors,
but $W^{-1}(K)=0$. Then the key result of \cite{G} is:

\begin{prop}
  The natural $W(K)$-module structure on $\tld{W}(A,\sigma)$
  extends to a $W(K)$-algebra structure, such that $\tld{W}$ is a functor
  from $\CBrh$ to the category of commutative $W(K)$-algebras.

  It satisfies $W_1(A,\sigma)\cdot W_{-1}(A,\sigma)=0$, and
  $W_\eps(A,\sigma)\cdot W_\eps(A,\sigma)\subset W(K)$, so in particular
  $\tld{W}_\eps(A,\sigma)$ is a subring.

  Furthermore, for any field extension $L/K$, the natural map
  $\tld{W}(A,\sigma)\to \tld{W}(A_L,\sigma_L)$ is a ring morphism.
\end{prop}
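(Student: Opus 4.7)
The strategy is to build the multiplication on $\tld{W}(A,\sigma)$ in two halves: the existing $W(K)$-module structure already determines products of the form $W(K)\cdot W(K)\subset W(K)$ and $W(K)\cdot W_\eps(A,\sigma)\subset W_\eps(A,\sigma)$, so what remains is to define bilinear pairings $\mu_{\eps_1,\eps_2}:W_{\eps_1}(A,\sigma)\times W_{\eps_2}(A,\sigma)\to \tld{W}(A,\sigma)$ for $\eps_1,\eps_2\in\{\pm 1\}$. The claim dictates that these land in $W(K)$ when $\eps_1=\eps_2$ and vanish otherwise, and the work is to produce such pairings with the right algebraic properties.

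For the nontrivial pairings, given $\eps_i$-hermitian forms $h_i:V_i\times V_i\to A$ on right $A$-modules $V_i$, I would form the $K$-vector space $V_1\otimes_A V_2$ (using $\sigma$ to convert a right action into a left action) and equip it with the $K$-valued bilinear form
\[ b(v_1\otimes v_2, v_1'\otimes v_2') = \Trd_A\bigl(h_1(v_1,v_1')\,\sigma(h_2(v_2,v_2'))\bigr). \]
The first round of verifications is that $b$ is well-defined (balanced over $A$, descends to Witt classes because hyperbolic inputs produce metabolic outputs), and that its \emph{symmetry type} is controlled by the product $\eps_1\eps_2$ together with the orthogonal/symplectic type of $\sigma$: precisely, $b$ is symmetric when $\eps_1=\eps_2$ (in the $W_\eps$ convention), and alternating otherwise, hence hyperbolic in $W(K)$. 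This yields both $W_\eps\cdot W_\eps\subset W(K)$ and $W_1\cdot W_{-1}=0$. The use of $W_\eps$ instead of $W^\eps$ is exactly the normalization that makes these statements uniform in the type of $\sigma$.

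Functoriality in $\CBrh$ follows because a hermitian Morita equivalence $(U,g):(B,\tau)\to(A,\sigma)$ transports hermitian forms in a way compatible with tensor products and reduced traces: the induced isomorphisms $W_\eps(B,\tau)\isom W_\eps(A,\sigma)$ intertwine the two versions of $b$. Compatibility with base change $L/K$ is automatic, since $V_1\otimes_A V_2$ and $\Trd_A$ both commute with scalar extension, so the formula defining $b$ is preserved verbatim.

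The main obstacle is the analysis of $b$ in the previous paragraph: one must carefully track how the involution $\sigma$ and the signs $\eps_1,\eps_2$ interact with the reduced trace in order to determine the symmetry type and establish well-definedness on Witt classes. Once $b$ is in hand, associativity and commutativity reduce to standard natural identifications of iterated tensor products together with the invariance of $\Trd_A$ under $\sigma$, and the fact that $\fdiag{1}_K\in W(K)$ acts as a unit is built into the construction.
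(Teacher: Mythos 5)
The paper does not actually prove this proposition: it is stated as ``the key result of \cite{G}'' and simply cited, so there is no in-paper argument to compare against line by line. That said, your proposed construction is consistent with all of the internal evidence the paper does give. Under the identification $A\otimes_A A\cong A$ via $v_1\otimes v_2\mapsto v_1\sigma(v_2)$, your form on $\fdiag{1}_\sigma\cdot\fdiag{1}_\sigma$ becomes $u\mapsto\Trd_A(\sigma(u)u)$, which is exactly the involution trace form $T_\sigma$ of Remark \ref{rem_trace}; and running it on $(\mathbb{H}_K,\gamma)$ with $a,b\in K^\times$ gives $ab\,T_\gamma=\fdiag{2ab}n_Q$, matching Example \ref{ex_witt_quater}. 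Your symmetry analysis is also right: from $h_i(y,x)=\eps_i\sigma(h_i(x,y))$ together with $\Trd_A\circ\sigma=\Trd_A$ and cyclicity of the trace, one gets $b(v_1'\otimes v_2',v_1\otimes v_2)=\eps_1\eps_2\,b(v_1\otimes v_2,v_1'\otimes v_2')$, which yields $W_\eps\cdot W_\eps\subset W(K)$ and forces the $\eps_1\neq\eps_2$ pairing to be alternating, hence zero. So this is almost certainly the construction of \cite{G}.

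Two remarks on the write-up. First, a small inaccuracy: the product $\eps_1\eps_2$ is unchanged by the substitution $\eps\mapsto -\eps$, so the $W_\eps$ versus $W^\eps$ normalization makes no difference to the symmetry statement; the real role of $W_\eps$ is in the Morita functoriality you invoke afterwards, where a type-reversing equivalence carries $W^\eps$ to $W^{-\eps}$ but $W_\eps$ to $W_\eps$. Second, the parts you flag as ``the main obstacle'' and then compress into ``standard natural identifications'' are precisely where the work lives: regularity of $b$ (so that the pairing actually lands in $W(K)$), descent of $b$ to Witt classes when $h_1$ or $h_2$ is metabolic, associativity of the mixed products such as $(xy)z$ for $x,z\in W_\eps$ and $y\in W_\eps$ (where $xy\in W(K)$ acts on $z$ by the module structure and one must match this against $x\cdot(yz)$), and the compatibility of $b$ with a hermitian Morita equivalence $(U,g)$. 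None of these would fail, but none of them is immediate either, and a complete proof would have to carry them out rather than gesture at them.
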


We will need an explicit description of this structure in two fundamental cases:

\begin{ex}
  If $(A,\sigma)=(K,\Id)$, then $\tld{W}(K,\Id)= W(K)\oplus W(K)$ is naturally
  isomorphic as a ring to the group algebra $W(K)[\Z/2\Z]$.
\end{ex}

\begin{ex}\label{ex_witt_quater}
  If $(A,\sigma)=(Q,\gamma)$ is a quaternion algebra with its canonical
  symplectic involution, then we have an explicit description of the product
  in $\tld{W}(Q,\gamma)$:
  \begin{itemize}
  \item for all $a,b\in K^\times$,
    \[ \fdiag{a}_\gamma\cdot \fdiag{b}_\gamma = \fdiag{2ab}n_Q. \]
  \item for all pure quaternions $z_1,z_2\in Q^\times$,
    \[ \fdiag{z_1}_\gamma\cdot \fdiag{z_2}_\gamma = \fdiag{-\Trd_Q(z_1z_2)}\phi_{z_1,z_2} \]
    where $\phi_{z_1,z_2}$ is the unique $2$-fold Pfister form such that
    $e_2(\phi_{z_1,z_2})= (z_2^2,z_2^2)+ [Q]$. Note that $\Trd_Q(z_1z_2)$ might
    be zero, but in that case $z_1$ and $z_2$ anti-commute, so $\phi_{z_1,z_2}$
    is actually hyperbolic anyway (and the product is zero).
  \end{itemize}
\end{ex}

\begin{rem}\label{rem_trace}
  We will not need to know any more about the ring structure of $\tld{W}(A,\sigma)$
  for the purpose of this article, but it is useful to have in mind that
  $\fdiag{1}_\sigma^2\in W(K)$ is nothing else than the so-called involution
  trace form $T_\sigma$. Explicitly, the quadratic form $T_\sigma: A\to K$ is defined
  by $a\mapsto \Trd_A(\sigma(a)a)$.

  Together with the functoriality properties, this implies that for any $h\in W_\eps(A,\sigma)$,
  we have $h^2 = T_{\sigma_h}\in W(K)$, where $\sigma_h$ is the adjoint involution of $h$.
\end{rem}

\section{Signature maps}\label{sec_signature}

In this section we describe all signature maps:

\begin{defi}
  Let $(A,\sigma)$ be an algebra with involution over $K$, and $P\in X(K)$
  be an ordering of $K$. A signature map of $(A,\sigma)$ at $P$ is
  a ring morphism $\tld{W}(A,\sigma)\to \Z$ which extends the classical
  signature map $W(K)\to \Z$ at $P$.
\end{defi}

There is a situation where such maps clearly exist:

\begin{defi}\label{def_retraction}
  Let $(A,\sigma)$ be an algebra with involution over $K$.
  We say that a ring morphism $\rho: \tld{W}(A,\sigma)\to W(K)$ is a
  retraction of $\tld{W}(A,\sigma)$ if it is the identity on $W(K)$.
\end{defi}

\begin{ex}\label{ex_retrac_ortho}
  The augmentation map $W(K)[\Zd]\to W(K)$ defines a retraction $\rho$ of
  $\tld{W}(K,\Id)$, which we call the canonical retraction of $\tld{W}(K,\Id)$.
\end{ex}

Using simple composition, the choice of a retraction (if it exists)
allows to define signature maps at all $P\in X(K)$ simultaneously;
see Section \ref{sec_polar} for further discussion on this phenomenon, which we
call there an \emph{algebraic polarization}.

\begin{rem}
  The existence of a retraction of $\tld{W}(A,\sigma)$ only depends
  on the Brauer class $[A]$, since if $[B]=[A]$ then for any
  involution $\tau$ on $B$ there is a $W(K)$-algebra isomorphism
  $\tld{W}(B,\tau)\simeq \tld{W}(A,\sigma)$.
\end{rem}

Following the method of the classical case (see section \ref{sec_field}),
our general strategy to define signature maps is to reduce to
the case of real closed fields. For those cases, we will show that
canonical retractions always exist. Of course, over a real closed
field, since $W(K)$ is canonically isomorphic to $\Z$, a retraction
is really the same thing as a signature map, but there is some
conceptual clarity (and some additional generality) gained by
expressing this in terms of retractions.

Recall that a classical result (due to Frobenius when $K=\mathbb{R}$) states
that if $K$ is real closed, the only division algebras over $K$ are $K$ itself
and the Hamilton quaternions $\mathbb{H}_K$, which is the quaternion algebra
with Brauer class $[\mathbb{H}_K] = (-1,-1)\in H^2(K,\mu_2)$. This
implies that any algebra with involution over $K$ is Morita equivalent to either
$(K,\Id)$ or $(\mathbb{H}_K,\gamma)$. We already saw in example \ref{ex_retrac_ortho}
that $\tld{W}(K,\Id)$ admits a canonical retraction, so it just
remains to treat the case of $(\mathbb{H}_K,\gamma)$.

We do this in a slightly more general setting. Recall that a field $K$
is \emph{Pythagorean} if a sum of squares in $K$ is a square (we refer
to \cite{Lam} for more details). In that case, either $K$ is quadratically
closed, or $K$ is formally real and $W(K)$ is torsion-free as an abelian group.
Of course a real closed field is Pythagorean.

\begin{prop}\label{prop_retrac_sympl}
  Let $K$ be a formally real Pythagorean field.
  Then there exist retractions $\rho: \tld{W}(\mathbb{H}_K,\gamma)\to W(K)$,
  and there is a unique one, called the canonical retraction, such
  that $\rho(\fdiag{z}_\gamma)=0$ for any invertible pure quaternion
  $z\in \mathbb{H}_K$, and $\rho(\fdiag{1}_\gamma)=2$.
\end{prop}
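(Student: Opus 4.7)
The plan is to construct the canonical retraction explicitly on the three summands of $\tld{W}(\mathbb{H}_K,\gamma) = W(K)\oplus W_1(\mathbb{H}_K,\gamma)\oplus W_{-1}(\mathbb{H}_K,\gamma)$, using the Pythagorean hypothesis to simplify the products described in Example \ref{ex_witt_quater}; existence of \emph{some} retraction will follow automatically, and uniqueness under the stated conditions is then immediate. First I would exploit the Pythagorean hypothesis: since $2 = 1^2 + 1^2$ is a sum of squares, $\fdiag{2} = \fdiag{1}$ in $W(K)$, hence $n_{\mathbb{H}_K} = \pfis{-1,-1} = 4$, and the formula $\fdiag{a}_\gamma \cdot \fdiag{b}_\gamma = \fdiag{2ab}\,n_{\mathbb{H}_K}$ becomes simply $4\fdiag{ab}$. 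Moreover, for any pure quaternion $z$ one has $z^2 = -n(z)$ with $n(z)$ a sum of three squares, hence a square, so $z^2 \equiv -1\pmod{K^{\times 2}}$ and $(z^2, z^2) = (z^2, -1) = (-1,-1) = [\mathbb{H}_K]$. Consequently $e_2(\phi_{z_1,z_2}) = (z_2^2, z_2^2) + [\mathbb{H}_K] = 0$, and since a $2$-fold Pfister form is hyperbolic as soon as its Clifford invariant vanishes, $\phi_{z_1,z_2} = 0$ and $\fdiag{z_1}_\gamma \cdot \fdiag{z_2}_\gamma = 0$; in particular $W_1 \cdot W_1 = 0$.

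Next, I would show that $\alpha \mapsto \alpha\cdot\fdiag{1}_\gamma$ defines a $W(K)$-module isomorphism $W(K) \isom W_{-1}(\mathbb{H}_K,\gamma)$. Surjectivity is immediate from $\fdiag{a}_\gamma = \fdiag{a}\cdot\fdiag{1}_\gamma$. For injectivity, if $\alpha\cdot\fdiag{1}_\gamma = 0$, then multiplying by $\fdiag{1}_\gamma$ inside $W_{-1}$ yields $4\alpha = \alpha\cdot\fdiag{1}_\gamma^2 = 0$ in $W(K)$ (using Remark \ref{rem_trace} or the formula above with $a=b=1$), and the torsion-freeness of $W(K)$ for formally real Pythagorean $K$ forces $\alpha = 0$.

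With these preparations in hand, I would define $\rho$ to be the identity on $W(K)$, zero on $W_1$, and twice the inverse of the preceding isomorphism on $W_{-1}$ (so $\fdiag{a}_\gamma \mapsto 2\fdiag{a}$). The normalization conditions $\rho(\fdiag{1}_\gamma) = 2$ and $\rho(\fdiag{z}_\gamma) = 0$ are built in. To verify $\rho$ is multiplicative, it suffices to check each bilinear slot: the slots involving $W_1$ vanish on both sides by the first paragraph, the $W(K)\cdot W_{\pm 1}$ slots are handled by $W(K)$-linearity of $\rho$, and for $W_{-1}\cdot W_{-1}$ one computes $\rho(\fdiag{a}_\gamma \cdot \fdiag{b}_\gamma) = \rho(4\fdiag{ab}) = 4\fdiag{ab} = (2\fdiag{a})(2\fdiag{b}) = \rho(\fdiag{a}_\gamma)\rho(\fdiag{b}_\gamma)$. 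Uniqueness is then straightforward: $W(K)$-linearity together with $\rho(\fdiag{1}_\gamma) = 2$ determines $\rho$ on $W_{-1}$, and $\rho(\fdiag{z}_\gamma) = 0$ on the pure quaternion generators determines it on $W_1$.

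The main obstacle is really the collapse $W_1 \cdot W_1 = 0$ via the Clifford invariant calculation: it depends critically on the Pythagorean hypothesis (to force $z^2 \equiv -1\pmod{K^{\times 2}}$) and on the classical fact that a $2$-fold Pfister form with trivial Clifford invariant is hyperbolic. Once this is established and the torsion-freeness of $W(K)$ trivializes the structure of $W_{-1}$, the remainder is straightforward bookkeeping across the bilinear slots.
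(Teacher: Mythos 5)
Your proof is correct and follows essentially the same strategy as the paper's: exploit the Pythagorean hypothesis to get $\fdiag{2}=\fdiag{1}$ and $z^2\equiv -1\pmod{K^{\times 2}}$, hence $\phi_{z_1,z_2}=0$; define $\rho$ to be zero on anti-hermitian forms and $\fdiag{a}_\gamma\mapsto 2\fdiag{a}$ on hermitian ones; then check multiplicativity slot by slot. The one small difference is that the paper obtains well-definedness of $\rho$ on $W(\mathbb{H}_K,\gamma)$ from the intrinsic quadratic form $q_h(x)=h(x,x)$ and $2$-divisibility in the torsion-free ring $W(K)$, whereas you first establish the $W(K)$-module isomorphism $W(K)\isom W(\mathbb{H}_K,\gamma)$, $\alpha\mapsto\alpha\cdot\fdiag{1}_\gamma$, using the ring structure of $\tld{W}(\mathbb{H}_K,\gamma)$ itself; both injectivity claims reduce to $4\alpha=0\Rightarrow\alpha=0$, so the two routes are equivalent.
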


\begin{proof}
  We define $\rho$ as the identity on $W(K)$ and the zero function
  on $W^{-1}(\mathbb{H}_K,\gamma)$. It remains to define $\rho(h)$
  for $h\in W(\mathbb{H}_K,\gamma)$. 

  In general, for any quaternion algebra $Q$ over any field $K$, if
  $(V,h)\in W(Q,\gamma)$, then there is a natural quadratic
  form $q_h:V\to K$ defined by $q_h(x)=h(x,x)$. We claim that
  in our case, $q_h=2\rho(h)$ for some $\rho(h)\in W(K)$, which is
  then unique since $W(K)$ is torsion free.

  Indeed, in general if $h=\fdiag{a_1,\dots,a_n}_\gamma$ then
  $q_h=\fdiag{a_1,\dots,a_n}n_Q$. In our case, $n_{\mathbb{H}_K}=\pfis{-1,-1}=4\in W(K)$,
  so we just have to take $\rho(h)=2\fdiag{a_1,\dots,a_n}$. The
  way we introduced $\rho(h)$ shows that this does not depend on the
  diagonalization.

  By construction, $\rho: \tld{W}(\mathbb{H}_K,\gamma)\to W(K)$ is then
  a $W(K)$-module morphism, and it is the only one satisfying the
  conditions of the proposition, since $W(\mathbb{H}_K,\gamma)$ is
  generated by $\fdiag{1}_\gamma$ as a $W(K)$-module. It remains
  to show that $\rho$ is a ring morphism, which amounts to $\rho(xy)=\rho(x)\rho(y)$
  for $x\in W^\eps(\mathbb{H}_K,\gamma)$ and $y\in W^{\eps'}(\mathbb{H}_K,\gamma)$.
  
  If $\eps\neq \eps'$, then $xy$ and $\rho(y)$ are zero, thus we can
  assume that $\eps=\eps'$. If $\eps=1$, we can reduce to $x=\fdiag{a}_\gamma$
  and $x=\fdiag{b}_\gamma$ for $a,b\in K^\times$, and according to example
  \ref{ex_witt_quater}, we have to show that
  \[ \fdiag{2ab}n_{\mathbb{H}_K} = (2\fdiag{a})\cdot (2\fdiag{b}), \]
  which holds since $n_{\mathbb{H}_K}=4\in W(K)$ and $\fdiag{2}$
  is represented by $n_{\mathbb{H}_K}$.

  If $\eps=-1$, we have to show that for all invertible pure quaternions
  $z_1,z_2\in \mathbb{H}_K$,
  \[ \fdiag{-\Trd_Q(z_1z_2)}\phi_{z_1,z_2} = 0, \]
  which means that $\phi_{z_1,z_2}$ is always hyperbolic, and
  by definition this means
  \[ (z_1^2,z_2^2) = (-1,-1). \]
  But for any pure quaternion $z$, $z^2$ is the opposite of a sum
  of three squares, so since $K$ is Pythagorean, $z^2$ is the opposite
  of a square.
\end{proof}

We now come back to the general case. Let $(A,\sigma)$ be
an algebra with involution over an arbitrary field $K$. For
any $P\in X(P)$, let $D_P$ be the unique division algebra over
$K_P$ Brauer-equivalent to $A_{K_P}$. We define an involution
$\theta_P$ on $D_P$, as well as a partition $X(K) = X_1(A)\coprod X_{-1}(A)$,
through the following dichotomy:
\begin{itemize}
\item if $D_P=K_P$, then $\theta_P=\Id$ and $P\in X_1(A)$;
\item if $D_P=\mathbb{H}_{K_P}$, then $\theta_P=\gamma$ and $P\in X_{-1}(A)$.
\end{itemize}

\begin{ex}
  If $A$ is split, $X_1(A)=X(K)$ and $X_{-1}(A)=\emptyset$.
  On the other hand, $X_1(\mathbb{H}_K)=\emptyset$ and
  $X_{-1}(\mathbb{H}_K)=X(K)$. 
\end{ex}

\begin{rem}
  The point of this partition is that signature maps
  at $P\in X_\eps(A)$ will be zero on $W_{-\eps}(A,\sigma)$.
  Since we give equal status to $W_1(A,\sigma)$ and
  $W_{-1}(A,\sigma)$, our treatment of $X_1(A)$ and $X_{-1}(A)$
  is symmetric.

  In \cite{AU}, the authors only study signatures on $W(A,\sigma)$,
  which creates an asymmetry. In their terminology, $Nil[A,\sigma]$
  is $X_1(A)$ if $\sigma$ is symplectic, and $X_{-1}(A)$ if $\sigma$ is
  orthogonal (so $Nil[A,\sigma]$ is always the set of ``uninteresting''
  orderings for $W(A,\sigma)$).
\end{rem}

For any $P\in X(K)$, there are exactly two isomorphisms
$(A_{K_P},\sigma_{K_P})\to (D_P,\theta_P)$ in $\mathbf{Br}_h(K_P)$.
Indeed, there is at least one since by construction $[A_{K_P}]=[D_P]$,
and any two choices differ by the multiplication by some $\fdiag{a}$
with $a\in K_P^\times$; but since $K_P$ is real closed, it has only two
square classes. Let us write $f_P^\eta$ for those two isomorphisms, with
$\eta=\pm 1$; choosing which one has label $\eta=1$ or $\eta=-1$ is arbitrary.
A simultaneous choice for all $P\in X(P)$ is what we will later call a polarization
(see section \ref{sec_polar}). Until then we always assume that we made
an arbitrary but fixed choice for all $P$.

For any $P\in X(K)$, we have a canonical retraction
$\rho_P: \tld{W}(D_P,\theta_P) \to W(K_P)$, provided either by example \ref{ex_retrac_ortho}
or proposition \ref{prop_retrac_sympl}. We can then define two signature maps of $(A,\sigma)$
at $P$ by:
\[ \tld{\sign}_P^\eta: \tld{W}(A,\sigma)\To \tld{W}(A_{K_P},\sigma_{K_P})
  \xrightarrow{(f_P^\eta)_*} \tld{W}(D_P,\theta_P) \xrightarrow{\rho_P} W(K_P) \xrightarrow{\sign} \Z. \]

We need a technical result from Astier and Unger:

\begin{lem}[\cite{AU}, thm 6.1]\label{lem_h_local}
  Let $\eps,\eta\in \{\pm 1\}$ and let $P\in X_\eps(A)$. Then there exists
  $h\in W_\eps(A,\sigma)$ such that $\tld{\sign}_P^\eta(h)\neq 0$.
\end{lem}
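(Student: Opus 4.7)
My approach is to use the ring structure of $\tld{W}(A,\sigma)$ together with the identity $h^2 = T_{\sigma_h}$ from Remark~\ref{rem_trace} to reduce the claim to a classical statement about signatures of involution trace forms. Since $\tld{\sign}_P^\eta$ is a composition of ring morphisms and extends $\sign_P: W(K) \to \Z$, applying it to $h^2 = T_{\sigma_h}$ gives
\[
\tld{\sign}_P^\eta(h)^2 = \sign_P(T_{\sigma_h}),
\]
so it suffices to exhibit $h \in W_\eps(A,\sigma)$ with $\sign_P(T_{\sigma_h}) \neq 0$.

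The Morita correspondence between $\eps$-hermitian forms and their adjoint involutions shows that for $h \in W_\eps(A,\sigma)$, the involution $\sigma_h$ is orthogonal precisely when $\eps = 1$ and symplectic precisely when $\eps = -1$; conversely, every involution $\tau$ on a matrix algebra $M_n(A)$ of the matching type arises as $\sigma_h$ for some rank-$n$ form $h \in W_\eps(A,\sigma)$. The problem thus reduces to exhibiting, for each $P \in X_\eps(A)$, an involution $\tau$ on some $M_n(A)$ of type matching $\eps$ with $\sign_P(T_\tau) \neq 0$. By the classical theory of involution trace forms (\cite{LT}, \cite[A.11]{BOI}), this nonvanishing is equivalent to $\tau_{K_P}$ being anisotropic over the real closure.

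By definition of $X_\eps(A)$, the algebra $A_{K_P}$ is Morita equivalent to $D_P$, with $D_P = K_P$ if $\eps = 1$ and $D_P = \mathbb{H}_{K_P}$ if $\eps = -1$. In both cases the canonical involution $\theta_P$ on $D_P$ is of the correct type and manifestly anisotropic: for $\eps = 1$, $\theta_P = \Id$ is adjoint to any positive-definite symmetric form, and for $\eps = -1$, Proposition~\ref{prop_retrac_sympl} already records $\rho_P(\fdiag{1}_\gamma) = 2 \neq 0$, witnessing anisotropy. Transporting through the Morita equivalence therefore yields an involution over $K_P$ of the desired type with nonzero trace-form signature at $P$.

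The principal obstacle is to descend this existence from $K_P$ to $K$, producing such a $\tau$ defined over $K$. I would parameterize involutions of fixed type on $M_n(A)$ as $\operatorname{Int}(u) \circ \sigma_0$ for a fixed involution $\sigma_0$ of the desired type and $u$ a suitably symmetric invertible element, and note that the condition ``$\sign_P(T_{\operatorname{Int}(u)\sigma_0}) \neq 0$'' is locally constant on the affine $K$-variety of such units, while by the previous paragraph it is satisfied at some point over $K_P$. A $K$-rational point of the corresponding nonempty open subset is then produced by a density/approximation argument, yielding $\tau$ over $K$ and hence an $h \in W_\eps(A,\sigma)$ with $\sigma_h = \tau$ and $\tld{\sign}_P^\eta(h) \neq 0$. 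This descent is essentially the substantive content of \cite{AU} Theorem~6.1 that we are invoking.
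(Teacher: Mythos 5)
The paper does not prove this lemma: it is imported directly from Astier--Unger \cite[thm.~6.1]{AU}, so there is no in-paper argument to compare against; you are instead attempting to reprove the cited result from scratch.

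Your reduction via $h^2 = T_{\sigma_h}$ and the reformulation --- find an involution $\tau$ of the appropriate type with $\sign_P(T_\tau)\neq 0$ --- are sound, and squaring into $W(K)$ is exactly the mechanism the paper itself uses in the proof of Proposition~\ref{prop_signatures}. (A small imprecision: nonzero $P$-signature of $T_\tau$ detects that $\tau_{K_P}$ is non-hyperbolic, not that it is anisotropic.) The existence of such a $\tau$ over $K_P$ is trivial, as you observe. The genuine gap is the descent to $K$, which you correctly flag as the whole content of the cited theorem and then try to dispatch with a ``density/approximation argument.'' This step fails: the condition $\sign_P(T_{\operatorname{Int}(u)\circ\sigma_0})\neq 0$ is not Zariski-open on the space of symmetric units $u$ (in the Zariski topology it would be all or nothing by irreducibility); it is open only for the order topology on $K_P$-points, and $K$ is in general \emph{not} dense in its real closure $K_P$ for that topology. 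For instance, with $K=\Q(t)$ and $P$ the ordering making $t$ positive infinitesimal, no rational function in $t$ approximates $\sqrt{t}\in K_P$ to within, say, $t$. So no approximation argument produces a $K$-rational $u$ from a $K_P$-rational one, and the substantive content of \cite[thm.~6.1]{AU} remains unproved in your sketch. The actual Astier--Unger proof goes through a transfer computation (the Knebusch trace formula for the reduced-trace transfer $W^\eps(A,\sigma)\to W(K)$), which is a genuinely different idea and cannot be replaced by a density argument.
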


We can now check that our construction is exhaustive:

\begin{prop}\label{prop_signatures}
  Let $(A,\sigma)$ be an algebra with involution over $K$. For any
  $P\in X(K)$, there are exactly two distinct signature maps of
  $(A,\sigma)$ at $P$, namely the $\tld{\sign}_P^\eta$ for $\eta=\pm 1$.
  If $P\in X_\eps(A)$, then these two maps are $0$ on $W_{-\eps}(A,\sigma)$,
  and they differ by a sign on $W_\eps(A,\sigma)$.
\end{prop}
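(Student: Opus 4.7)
The plan is to split the argument into three parts: first verify that the proposed $\tld{\sign}_P^\eta$ are indeed signature maps with the stated vanishing and sign-flip properties, and then establish uniqueness by bootstrapping from Lemma \ref{lem_h_local}.

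For the existence side, I would just chase the definition: $\tld{\sign}_P^\eta$ is a composition of ring morphisms (base change, the Morita isomorphism $(f_P^\eta)_*$, the canonical retraction $\rho_P$ from Example \ref{ex_retrac_ortho} or Proposition \ref{prop_retrac_sympl}, and $\sign: W(K_P)\isom \Z$), and on $W(K)$ it restricts to $\sign_P$. The vanishing of $\tld{\sign}_P^\eta$ on $W_{-\eps}(A,\sigma)$ for $P\in X_\eps(A)$ reduces to checking that $\rho_P$ kills $W_{-1}(D_P,\theta_P)$: trivially when $D_P=K_P$ because $W^{-1}(K_P,\Id)=0$, and by construction of $\rho_P$ when $D_P=\mathbb{H}_{K_P}$. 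The sign difference on $W_\eps(A,\sigma)$ will come from the fact that $f_P^{+1}$ and $f_P^{-1}$ differ by the nontrivial square class $\fdiag{-1}$ of $K_P$, combined with $\sign_P(\fdiag{-1})=-1$.

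For uniqueness, let $\varphi:\tld{W}(A,\sigma)\to \Z$ be any signature map at $P$. I would first establish that $\varphi$ vanishes on $W_{-\eps}(A,\sigma)$. For any such $h$, Remark \ref{rem_trace} gives $\varphi(h)^2=\sign_P(h^2)$, so it suffices to show $h^2$ has $P$-signature zero. By functoriality of the ring structure this reduces, via base change and Morita equivalence, to showing $(h')^2=0$ in $W(K_P)$ for the image $h'\in W_{-\eps}(D_P,\theta_P)$. When $\eps=1$ this is automatic as $W^{-1}(K_P,\Id)=0$. When $\eps=-1$, I would diagonalise $h'$ and invoke the calculation already present in the proof of Proposition \ref{prop_retrac_sympl}: over a real closed (in particular Pythagorean) field, the product $\fdiag{z_1}_\gamma\cdot \fdiag{z_2}_\gamma$ vanishes for any pure quaternions $z_i$, because the associated $2$-fold Pfister form has $e_2=0$ and is therefore trivial in $W(K_P)$ for rank reasons.

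Finally, I would bootstrap uniqueness on $W_\eps(A,\sigma)$ from Lemma \ref{lem_h_local}. Pick $h_0\in W_\eps(A,\sigma)$ with $\tld{\sign}_P^{+1}(h_0)\neq 0$; then $\varphi(h_0)^2=\tld{\sign}_P^{+1}(h_0)^2$ forces $\varphi(h_0)=\tld{\sign}_P^\eta(h_0)$ for a unique $\eta\in\{\pm 1\}$. For any $h\in W_\eps(A,\sigma)$, the element $h\cdot h_0$ lies in $W(K)$, so
\[ \varphi(h)\,\varphi(h_0) = \sign_P(h\,h_0) = \tld{\sign}_P^\eta(h)\,\varphi(h_0), \]
and cancelling the nonzero integer $\varphi(h_0)$ gives $\varphi(h)=\tld{\sign}_P^\eta(h)$. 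The main obstacle, I expect, will be the vanishing of $\varphi$ on $W_{-\eps}$, since this is not a purely formal consequence of $\varphi$ being a ring morphism and requires the explicit product computation from Example \ref{ex_witt_quater} to get off the ground.
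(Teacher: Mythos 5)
Your proof is correct and follows essentially the same strategy as the paper's: establish the properties of the $\tld{\sign}_P^\eta$ from the construction of $\rho_P$, then use $\varphi(x)^2 = \sign_P(x^2)$ together with Lemma \ref{lem_h_local} and multiplicativity against a pivot element to pin down any ring morphism. One small inefficiency: your closing remark overstates the difficulty of the vanishing of $\varphi$ on $W_{-\eps}(A,\sigma)$. Once you have shown (as you do in your first paragraph) that $\tld{\sign}_P^\eta$ vanishes on $W_{-\eps}(A,\sigma)$, the identity $\varphi(h)^2 = \sign_P(h^2) = \tld{\sign}_P^\eta(h)^2 = 0$ gives $\varphi(h)=0$ purely formally, with no need to re-run the pure-quaternion computation from Proposition \ref{prop_retrac_sympl}; this is exactly the shortcut the paper takes.
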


\begin{proof}
  The fact that both $\tld{\sign}_P^\eta$ are zero on $W_{-\eps}(A,\sigma)$
  follows directly from the construction of the retraction $\rho_P$
  in each case. The fact that they differ by a sign on $W_\eps(A,\sigma)$
  follows from the fact that $f_P^1$ and $f_P^{-1}$ differ by multiplication
  by $\fdiag{-1}=-1\in W(K)$. Lemma \ref{lem_h_local} then shows that the
  $\tld{\sign}_P^\eta$ are two different functions.
  
  Let $f:\tld{W}(A,\sigma)\to \Z$ be a ring morphism extending
  $\sign_P$, and let $x\in W_{\eps'}(A,\sigma)$ for some $\eps'=\pm 1$.
  Then $f(x)^2 = \sign_P(x^2) = \tld{\sign}_P^\eta(x)^2$ (for any $\eta$),
  so $f(x)=\tld{\sign}_P^{s(x)}(x)$ for some $s(x)=\pm 1$ (which is
  uniquely determined if $f(x)\neq 0$).
  Now we want to show that we can take the same $s(x)$ for all
  possible $x$.
  
  If $\eps'=-\eps$, then $f(x)=\tld{\sign}_P^\eta(x)=0$ for
  both $\eta$, so we can choose $s(x)$ arbitrarily for all
  $x\in W_{-\eps}(A,\sigma)$.
  
  If $\eps'=\eps$, according to lemma \ref{lem_h_local}, there is some
  $y\in W_{\eps}(A,\sigma)$ such that $\tld{\sign}_P^\eta(y)\neq 0$;
  in particular, $s(y)$ is uniquely determined, and we call it $s$.
  Now for any $x\in W_{\eps}(A,\sigma)$, if $f(x)= 0$ then we can choose $s(x)=s$
  if we want, and if $f(x)\neq 0$, then $s(x)$ is uniquely determined and
  \[ f(xy) = \sign_P(xy) = \tld{\sign}_P^s(xy)
       = s(x)s\cdot \tld{\sign}_P^{s(x)}(x)\tld{\sign}_P^s(y) = s(x)s\cdot f(x)f(y) \]
  and since $f(x)f(y)\neq 0$, we have $s(x)s=1$.
\end{proof}

\begin{rem}
  Looking at the proof, we see that the only ring morphisms
  $\tld{W}_\eps(A,\sigma)\to \Z$ extending the signature at $P$ are
  the restrictions of the signature maps $\tld{\sign}_P^\eta$ (those
  two restrictions coincide exactly when $P\in X_{-\eps}(A)$).
\end{rem}

\begin{rem}\label{rem_compar_sign_au}
  We warn the reader that while our signature maps are
  essentially the same as those in previous literature, they
  are normalized differently when $P\in X_{-1}(A)$. Indeed,
  previously signature maps where only required to be $W(K)$-module
  morphism $W(A,\sigma)\to \Z$, where $\Z$ is seen as a $W(K)$-module
  through $\sign_P$. In that case, Astier and Unger show in
  \cite[prop 7.4]{AU2} that one can find a ``minimal'' surjective signature
  such that any other module morphism is an integer multiple
  of this minimal one (which is then well-defined up to a sign,
  of course), and they take their signatures to be those minimal
  ones. This gives the same definition as ours if $P\in X_1(A)$,
  but if $P\in X_{-1}(A)$ we have to multiply those minimal
  maps by $2$ to get a ring morphism. 
\end{rem}

\begin{rem}
  We can also define a signature of the involution $\sigma$:
  $\sign_P^\eta(\sigma)\stackrel{\text{def}}{=}\tld{\sign}_P^\eta(\fdiag{1}_\sigma)$.
  We again encounter a sign ambiguity, which is why in \cite{LT}
  only the absolute value $|\sign_P^\eta(\sigma)|$ is defined,
  and taken as the definition of the signature of $\sigma$.
  Note that the definitions agree since they characterize
  $\sign_P(\sigma)\in\N$ by $\sign_P(\sigma)^2=\sign_P(T_\sigma)$,
  and $T_\sigma=\fdiag{1}_\sigma^2$ in $\tld{W}(A,\sigma)$ (see remark \ref{rem_trace}).
\end{rem}

\section{The spectrum of the mixed Witt ring}

Now that we have our signature maps, we want to obtain a
description of $\Spec(\tld{W}(A,\sigma))$ similar to proposition
\ref{prop_spec_w} for $W(K)$. We achieve this by studying the fibers
of the natural morphism
\[ \pi:  \Spec(\tld{W}(A,\sigma))\To \Spec(W(K)) \]
induced by the inclusion.
\\

First we need the equivalent of the fundamental ideal.
Recall that if $V$ is a (right) $A$-module, its \emph{reduced dimension}
is $\rdim(V)=\frac{\dim_K(V)}{\deg(A)}$, or equivalently it is the degree
of $\End_A(V)$. Then since hyperbolic forms have an even reduced dimension,
the Witt class of an $\eps$-hermitian space $(V,h)$ has a well-defined
reduced dimension mod $2$: $\rdim_2(h)\in \Zd$. This defines additive
morphisms $W^\eps(A,\sigma)\to \Zd$, and in addition to the classical
``dimension mod $2$'' morphism $W(K)\to \Zd$, we get a map
\[ \rdim_2: \tld{W}(A,\sigma)\to \Zd \]
which is easily seen to be a ring morphism.

\begin{defi}
  The fundamental ideal of $\tld{W}(A,\sigma)$ is $I(A,\sigma)=\Ker(\rdim_2)$.
\end{defi}

\begin{rem}
  If $A$ is not split, then $I(A,\sigma)=I(K)\oplus W(A,\sigma)\oplus W^{-1}(A,\sigma)$.
  If $A$ is split, then under any Morita equivalence $\tld{W}(A,\sigma)\simeq W(K)[\Zd]$,
  we have $I(A,\sigma)=\ens{x,y\in W(K)}{x+y\in I(K)}$.
\end{rem}

Let $P\in X(K)$ and let $p\in \N$ be either 0 or a prime number.
Then we define for $\eta\in \{\pm 1\}$:
\[ \tld{\sign}_{P,p}^\eta: \tld{W}(A,\sigma)\xrightarrow{\tld{\sign}_P^\eta} \Z\To \Z/p\Z \]
and
\[ I_{P,p}^\eta(A,\sigma) = \Ker(\tld{\sign}_{P,p}^\eta), \]
which is by construction a prime ideal of $\tld{W}(A,\sigma)$
(maximal if $p\neq 0$).


\begin{prop}\label{prop_spec_w_mixte}
  Let $(A,\sigma)$ be an algebra with involution over $K$. Then
  the fiber of $\pi$ above $I(K)$ is $\{I(A,\sigma)\}$. In particular,
  $I(A,\sigma)$ is the only prime ideal of $\tld{W}(A,\sigma)$
  with residual characteristic 2, and for any $P\in X(K)$,
  we have $I_{P,2}^\eta(A,\sigma) = I(A,\sigma)$.

  Furthermore, let $P\in X(K)$, and $p$ be either $0$ or an odd prime.
  Then the fiber of $\pi$ above $I_{P,p}(K)$ is
  $\ens{I_{P,p}^\eta(A,\sigma)}{\eta=\pm 1}$ (the two being distinct).

  

\end{prop}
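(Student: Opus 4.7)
The strategy is to handle the two types of primes of $W(K)$ separately, relying throughout on the identity $h^2=T_{\sigma_h}\in W(K)$ for $h\in W_\eps(A,\sigma)$ (Remark \ref{rem_trace}), which combined with $W_1\cdot W_{-1}=0$ makes $\tld{W}(A,\sigma)$ integral over $W(K)$ (every element satisfies an explicit monic quadratic). For the fiber above $I(K)$: the ring morphism $\rdim_2$ has kernel $I(A,\sigma)$, a maximal ideal lying over $I(K)$. For uniqueness I would split on whether $A$ is split. In the split case, Morita invariance yields $\tld{W}(A,\sigma)\simeq W(K)[\Zd]$, which reduces modulo $I(K)$ to the local ring $\mathbb{F}_2[t]/(t-1)^2$, giving a unique prime. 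In the non-split case, the existence of an involution of the first kind forces the index of $A$ to be a power of $2$ at least $2$, so every reduced dimension is even, and $T_{\sigma_h}$ has $K$-dimension $\rdim(h)^2\in 2\Z$, hence lies in $I(K)$. Thus every $\bar h$ with $h\in W_\eps$ is nilpotent in $\tld{W}(A,\sigma)/I(K)\tld{W}(A,\sigma)$, and combined with $W_1\cdot W_{-1}=0$ every element of $\bar W_1+\bar W_{-1}$ is nilpotent, so any prime in the fiber contains $I(A,\sigma)$ and equals it by maximality. The assertions about $I_{P,2}^\eta$ then follow since these lie over $I_{P,2}(K)=I(K)$.

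For the fiber above $I_{P,p}(K)$ with $p$ odd or $0$, the $I_{P,p}^\eta(A,\sigma)$ lie in the fiber by construction. Their distinctness: since $\tld{\sign}_P^{+1}$ and $\tld{\sign}_P^{-1}$ agree on $W(K)$ and on $W_{-\eps}$ and differ by a sign on $W_\eps$ (with $P\in X_\eps(A)$), their kernels coincide modulo $p$ exactly when $\tld{\sign}_P^{+1}(W_\eps)\subseteq p\Z$. Base-changing to $K_P$ and tracing through $(f_P^\eta)_*$, the retraction (Example \ref{ex_retrac_ortho} or Proposition \ref{prop_retrac_sympl}), and the signature on $W(K_P)$, I compute this image to be $\Z$ if $P\in X_1(A)$ and $2\Z$ if $P\in X_{-1}(A)$, which for $p$ odd or $0$ is never contained in $p\Z$.

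For exhaustion, let $\mathfrak{q}$ be any prime in the fiber, set $R_0=W(K)/I_{P,p}(K)$ and $R=\tld{W}(A,\sigma)/\mathfrak{q}$ (a domain containing $R_0$), and suppose $P\in X_\eps(A)$. For $h\in W_{-\eps}$, the identity $h^2=T_{\sigma_h}$ together with $\sign_P(T_{\sigma_h})=\tld{\sign}_P^\eta(h)^2=0$ shows $h^2\in I_P(K)\subseteq I_{P,p}(K)\subseteq\mathfrak{q}$, so $\bar h=0$ in $R$. For $h\in W_\eps$ the same identity gives $\bar h^2=\overline{\tld{\sign}_P^{+1}(h)^2}$, so $\bar h=\pm\overline{\tld{\sign}_P^{+1}(h)}$ as $R$ is a domain. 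To synchronize signs I use the previous paragraph to pick $h_0\in W_\eps$ with $\tld{\sign}_P^{+1}(h_0)=m_\eps\in\{1,2\}$, so that $\overline{m_\eps}$ is a non-zero-divisor in $R$; write $\bar{h_0}=\overline{\eta_0 m_\eps}$ for some $\eta_0\in\{\pm 1\}$, and for arbitrary $h\in W_\eps$ compute $\bar h\cdot\bar{h_0}=\overline{h\cdot h_0}=\overline{\tld{\sign}_P^{+1}(h)\cdot m_\eps}$ using $h\cdot h_0\in W(K)$, then cancel $\overline{m_\eps}$ to obtain $\bar h=\overline{\tld{\sign}_P^{\eta_0}(h)}$. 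This shows the quotient map $\tld{W}(A,\sigma)\to R$ factors through $\tld{\sign}_{P,p}^{\eta_0}$, so $\mathfrak{q}\supseteq I_{P,p}^{\eta_0}(A,\sigma)$; incomparability of primes in the same fiber of the integral extension $W(K)\subseteq\tld{W}(A,\sigma)$ then yields equality.

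The main obstacle is the sign-consistency step in the last paragraph: promoting the pointwise ambiguity $\bar h=\pm\overline{\tld{\sign}_P^{+1}(h)}$ to a uniform sign $\eta_0$ requires both the base-change computation (to produce an $h_0$ whose signature is invertible in $R_0$, which also underlies the distinctness argument) and the multiplicativity provided by $W_\eps\cdot W_\eps\subseteq W(K)$ to propagate that sign to every other $h\in W_\eps$.
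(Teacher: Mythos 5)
Your proof is correct, and for the fiber above $I_{P,p}(K)$ with $p$ odd or $0$ it follows essentially the same mechanism as the paper: reduce mod $\mathfrak{q}$, use $h^2=T_{\sigma_h}$ to show $\bar h=\pm\overline{\tld{\sign}_P^{+1}(h)}$ in the integral domain $R$, then synchronize the sign by multiplying against a reference $h_0\in W_\eps$ and cancelling; this is the same argument the paper invokes by pointing back to the proof of Proposition \ref{prop_signatures}, just written out more explicitly, with incomparability replacing the paper's ``$R=S$'' observation. For the fiber above $I(K)$, however, you take a genuinely different route. The paper's argument is uniform in $A$: for $x=x_0+x_1$ with $x_0\in W(K)$, $x_1\in W_1\oplus W_{-1}$, one has $x^2-2x_0x_1=x_0^2+x_1^2\in W(K)$, and $2x_0x_1$ lies in any prime of residual characteristic $2$; primality reduces $x\in I$ to $x^2\in I$, and then $I\cap W(K)=I(K)=I(A,\sigma)\cap W(K)$ forces $I=I(A,\sigma)$. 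Your split/non-split dichotomy (group-ring computation $\mathbb{F}_2[t]/(t-1)^2$ in the split case; nilpotency of $W_\eps$ modulo $I(K)\tld W(A,\sigma)$ via $\ind(A)\in 2\Z$ in the non-split case) also works and is instructive, but it is longer and the paper's avoids any reference to the index.

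One point worth flagging: both your distinctness step and the choice of $h_0$ with $\tld{\sign}_P^{+1}(h_0)=m_\eps\in\{1,2\}$ require that the image of $\tld{\sign}_P^{+1}$ on $W_\eps(A,\sigma)$ is exactly $\Z$ (resp.\ $2\Z$), not merely nonzero. This does \emph{not} follow from base-changing to $K_P$ alone (the map $W_\eps(A,\sigma)\to W_\eps(A_{K_P},\sigma_{K_P})$ need not be surjective), nor from Lemma \ref{lem_h_local} as stated, which only gives a nonzero value. It is the surjectivity result of Astier--Unger recalled in Remark \ref{rem_compar_sign_au} (\cite[prop.\ 7.4]{AU2}). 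You should cite that explicitly; the paper's own proof is equally terse on this point, so this is a matter of supplying the right reference rather than a logical gap peculiar to your argument.
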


\begin{proof}
  Let $I\subset \tld{W}(A,\sigma)$ be a prime ideal with residual
  characteristic 2. We want to show that $I(A,\sigma)= I$, which also
  implies the statement about $I_{P,2}^\eta(A,\sigma)$ and about the fiber
  of $\pi$ above $I(K)$. Let $x=x_0+x_1$ with $x_0\in W(K)$ and
  $x_1\in W(A,\sigma)\oplus W^{-1}(A,\sigma)$. It is enough to show that
  $x^2\in I$ iff $x^2\in I(A,\sigma)$. Now $x^2=x_0^2+2x_0x_1+x_1^2$,
  and $2x_0x_1$ is always in $I$ and $I(A,\sigma)$ since they have residual
  characteristic $2$, so we are reduced to show that $y=x_0^2+x_1^2\in W(K)$
  is in $I$ iff it is in $I(A,\sigma)$. But since $I\cap W(K)$ has residual
  characteristic $2$, $I\cap W(K)=I(K)=I(A,\sigma)\cap W(K)$.

  Now let $P\in X(K)$, and $p$ be either $0$ or an odd prime;
  we set $R=\Z/p\Z$. Let $I$ be in the fiber of $\pi$ above
  $I_{P,p}(K)$, and let $f: \tld{W}(A,\sigma)\to S$ be the
  surjective morphism with kernel $I$, with $R\subset S$. Then
  since $f_{|W(K)}=\sign_P$,
  the same proof as for proposition \ref{prop_signatures}
  shows that $R=S$ and $f=\tld{\sign}_{P,p}^\eta$ for
  a unique $\eta=\pm 1$. Indeed, we show the same way that for fixed
  $x\in W_{\eps}(A,\sigma)$ we have $f(x)^2=\tld{\sign}_{P,p}^\eta(x)^2$.
  Since $S$ is integral, this means $f(x)=\tld{\sign}_{P,p}^\eta(x)$
  for some $\eta$ (in particular $S=R$), and the rest of the reasoning is
  also the same, the only difference being that we have to invoque that
  $p\neq 2$ to justify that $\eta$ is uniquely determined (and thus the
  two possible ideals are really distinct).
\end{proof}

\begin{coro}
  Let $(A,\sigma)$ be an algebra with involution over $K$. We have
  \[ \Spec(\tld{W}(A,\sigma)) = \{I(A,\sigma)\} \coprod_{P\in X(K)}
    \ens{I_{P,p}^\eta(A,\sigma)}{\eta=\pm 1,\text{$p$ odd or $0$}}. \]
\end{coro}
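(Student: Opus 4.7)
The plan is to derive this as a direct consequence of the two structural results already proved, namely Proposition \ref{prop_spec_w} for $\Spec(W(K))$ and Proposition \ref{prop_spec_w_mixte} for the fibers of the natural map $\pi:\Spec(\tld{W}(A,\sigma))\to \Spec(W(K))$.

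First I would observe that for any prime ideal $I\in \Spec(\tld{W}(A,\sigma))$, the preimage $\pi(I) = I\cap W(K)$ is a prime ideal of $W(K)$, so by Proposition \ref{prop_spec_w} it must be either the fundamental ideal $I(K)$ or one of the ideals $I_{P,p}(K)$ for some $P\in X(K)$ and some $p$ that is either $0$ or an odd prime (the case $p=2$ already falls under $I(K)$). Thus every prime of $\tld{W}(A,\sigma)$ lies in one of the fibers enumerated by Proposition \ref{prop_spec_w_mixte}, and that proposition identifies those fibers: the fiber over $I(K)$ is the singleton $\{I(A,\sigma)\}$, and the fiber over $I_{P,p}(K)$ (for $p$ odd or $0$) consists of the two distinct ideals $I_{P,p}^\eta(A,\sigma)$ with $\eta=\pm 1$. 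This immediately gives the claimed set-theoretic equality.

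It remains to verify that the union on the right-hand side is disjoint. The ideal $I(A,\sigma)$ has residual characteristic $2$ (by definition it is the kernel of $\rdim_2:\tld{W}(A,\sigma)\to \Zd$), whereas each $I_{P,p}^\eta(A,\sigma)$ for $p$ odd or $0$ has residual characteristic $p\neq 2$, so these are distinct. For two entries $I_{P,p}^\eta(A,\sigma)$ and $I_{P',p'}^{\eta'}(A,\sigma)$ with $(P,p)\neq (P',p')$, their images under $\pi$ are the distinct ideals $I_{P,p}(K)\neq I_{P',p'}(K)$ of $W(K)$ (by Proposition \ref{prop_spec_w}), hence the ideals themselves are distinct; and for fixed $(P,p)$ with $p$ odd or $0$, the two ideals obtained for $\eta=\pm 1$ are distinct by the last assertion of Proposition \ref{prop_spec_w_mixte}.

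Since essentially all the content has been packaged into the previous proposition, there is no real obstacle here: the only thing to double-check is that the case $p=2$ is correctly absorbed into the single ideal $I(A,\sigma)$ (using $I_{P,2}^\eta(A,\sigma) = I(A,\sigma)$ for every $P$ and $\eta$, as stated in Proposition \ref{prop_spec_w_mixte}), which is precisely why the indexing on the right restricts to $p$ odd or $0$.
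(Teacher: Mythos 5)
Your proof is correct and follows precisely the route the paper intends: the corollary is an immediate consequence of Proposition \ref{prop_spec_w} (enumerating $\Spec(W(K))$) together with Proposition \ref{prop_spec_w_mixte} (describing the fibers of $\pi$), and the paper accordingly gives no separate argument. Your check of disjointness via residual characteristic and via $\pi$ is exactly the intended bookkeeping.
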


\begin{rem}
  The proof also applies to compute the spectrum of $\tld{W}_\eps(A,\sigma)$:
  there is one prime above $I(K)$, two above each $I_{P,p}(K)$ with
  $P\in X_\eps(A)$ and one above each $I_{P,p}(K)$ with $P\in X_{-\eps}(A)$.
\end{rem}

\begin{rem}
  In the continuity of remark \ref{rem_compar_sign_au}, Astier
  and Unger show in \cite[6.5,6.7]{AU2} slightly different
  and arguably stronger results, since they obtain a similar
  classification with a weaker notion of ``ideal'' (which does
  not involve the product of two hermitian forms). There is however a
  difference for primes above $I(K)$, since they find many
  such ``ideals'' (but of course only $I(A,\sigma)$ is
  an actual ideal).
\end{rem}

Emulating the classical case, we set
\[ \tld{X}(A,\sigma)=\Spec_0(\tld{W}(A,\sigma)) \]
as a topological subspace of $\Spec(\tld{W}(A,\sigma))$;
its elements are the $I_P^\eta:= I_{P,0}^\eta$ for $P\in X(K)$.
Thus the continuous map $\pi: \Spec(\tld{W}(A,\sigma))\to \Spec(W(K))$
induces a continuous two-to-one map $\pi:\tld{X}(A,\sigma)\to X(K)$.

\begin{lem}
  The map $\pi: \tld{X}(A,\sigma)\to X(K)$ is open.
\end{lem}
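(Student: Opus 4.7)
The plan is to show that the image of any basic Zariski open in $\tld{X}(A,\sigma)$ is open in $X(K)$. For any $\tld{x}\in \tld{W}(A,\sigma)$, the basic open $U_{\tld{x}}=\{I\in \tld{X}(A,\sigma)\, :\, \tld{x}\notin I\}$ consists exactly of the $I_P^\eta$ such that $\tld{\sign}_P^\eta(\tld{x})\neq 0$. So I need to describe $\pi(U_{\tld{x}})\subset X(K)$ as a Zariski open, i.e., as the non-vanishing locus of some element of $W(K)$.

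Decompose $\tld{x}=x_0+x_1+x_{-1}$ with $x_0\in W(K)$ and $x_\eps \in W_\eps(A,\sigma)$. Given a fixed ordering $P$, exactly one of the components $x_1,x_{-1}$ has nonzero signatures (the one in $W_\eps(A,\sigma)$ for $\eps$ such that $P\in X_\eps(A)$), and the two signs $\tld{\sign}_P^{\pm 1}$ differ by a sign on that component, according to Proposition \ref{prop_signatures}. Writing $s=\tld{\sign}_P^1(x_1+x_{-1})$, I then get
\[ \tld{\sign}_P^1(\tld{x})=\sign_P(x_0)+s,\qquad \tld{\sign}_P^{-1}(\tld{x})=\sign_P(x_0)-s. \]
Thus both signatures vanish simultaneously at $P$ if and only if $\sign_P(x_0)=0$ and $s=0$.

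The trick is now to encode the condition ``$(\sign_P(x_0), s)\neq (0,0)$'' through the sign of a single element of $W(K)$. Consider $y=x_0^2+x_1^2+x_{-1}^2 \in W(K)$ (the $W(K)$-part of $\tld{x}^2$, up to the cross-terms $2x_0(x_1+x_{-1})$). Using that the ring morphisms $\tld{\sign}_P^\eta$ satisfy $\tld{\sign}_P^\eta(z^2)=\tld{\sign}_P^\eta(z)^2$, and the identity $\sign_P(x_\eps^2)=\tld{\sign}_P^1(x_\eps)^2$ (one of which is zero), I get
\[ \sign_P(y)=\sign_P(x_0)^2+s^2, \]
a sum of squares of integers. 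Hence $\sign_P(y)=0$ iff $\sign_P(x_0)=0$ and $s=0$ iff $P\notin \pi(U_{\tld{x}})$. Therefore $\pi(U_{\tld{x}})=\{P\in X(K)\, :\, \sign_P(y)\neq 0\}$, which is a basic open in $X(K)$.

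The main obstacle is really just guessing the right element of $W(K)$; everything else is a straightforward computation using the sign-ambiguity pattern of Proposition \ref{prop_signatures} and the fact that nonnegative integers sum to zero only if each one is zero. Nothing deeper is required.
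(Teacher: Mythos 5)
Your proof is correct and follows essentially the same route as the paper: reduce the condition that both $\tld{\sign}_P^{\pm 1}(\tld{x})$ vanish to the vanishing of classical signatures of elements of $W(K)$, namely $x_0$ and $x_1^2+x_{-1}^2$ (the paper writes this as $x_1^2$ with $x_1\in W(A,\sigma)\oplus W^{-1}(A,\sigma)$; since $W_1\cdot W_{-1}=0$ the cross-term vanishes, so it is the same element). The only cosmetic difference is that you package the two conditions into a single one, exhibiting $\pi(U_{\tld{x}})$ as the principal open $\{P:\sign_P(y)\neq 0\}$, whereas the paper shows the complement is a closed set cut out by two signature conditions.
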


\begin{proof}
  Let $x\in \tld{W}(A,\sigma)$. Since the principal open sets $D(x)$
  form a base of the Zariski topology, it is enough to show that $\pi(U)$
  is open, where $U=D(x)\cap \tld{X}(A,\sigma)$. Let us write $x=x_0+x_1$,
  with $x_0\in W(K)$ and $x_1\in W(A,\sigma)\oplus W^{-1}(A,\sigma)$.

  Then $\pi(U)$ is the set of orderings $P$ such that at least one
  of the $\tld{\sign}_P^\eta(x)$ is non-zero. But the only way they can
  both be zero is if $\sign_P(x_0)$ and $\tld{\sign}_P^\eta(x_1)=0$,
  and that second equality is equivalent to $\sign_P(x_1^2)=0$. Those
  two conditions define a closed subset of $X(K)$, so their complement
  is open.
\end{proof}

As in the classical case we have a total signature:

\begin{defi}
  Let $(A,\sigma)$ be an algebra with involution over $K$. The total
  signature of any $x\in \tld{W}(A,\sigma)$ is the function
  \[ \tld{\sign}(x): \tld{X}(A,\sigma)\To \Z \]
  obtained by reducing $x$ modulo the various primes in $\tld{X}(A,\sigma)$.
\end{defi}

Explicitly, $\tld{\sign}(x)(I_P^\eta) = \tld{\sign}_P^\eta(x)$, but it
is important to note that $\tld{\sign}(x)$ does not depend on any choice
of polarization.

\begin{prop}\label{prop_tot_sign_cont}
  Let $(A,\sigma)$ be an algebra with involution over $K$. Then
  for any $x\in \tld{W}(A,\sigma)$, the total signature $\tld{\sign}(x)$
  is a continuous function.
\end{prop}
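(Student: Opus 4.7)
The plan is to show each fiber of $\tld{\sign}(x)$ is clopen in $\tld{X}(A,\sigma)$, which, since $\Z$ is discrete, is equivalent to continuity. This reduces to verifying two things: every fiber is Zariski-closed, and the image is finite.

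First, for any $n\in \Z$ the fiber $\tld{\sign}(x)^{-1}(n)$ consists of those primes $I_P^\eta \in \tld{X}(A,\sigma)$ with $x - n\fdiag{1}\in I_P^\eta$. It is therefore the intersection of $\tld{X}(A,\sigma)$ with the Zariski-closed set in $\Spec(\tld{W}(A,\sigma))$ cut out by $x - n\fdiag{1}$, and in particular closed in $\tld{X}(A,\sigma)$.

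Next I would show that the image of $\tld{\sign}(x)$ is finite. Decomposing $x = x_0 + x_+ + x_-$ with $x_0\in W(K)$ and $x_\eps \in W_\eps(A,\sigma)$, the ring morphism property of $\tld{\sign}_P^\eta$ gives
\[ \tld{\sign}_P^\eta(x) = \sign_P(x_0) + \tld{\sign}_P^\eta(x_+) + \tld{\sign}_P^\eta(x_-), \]
and since $W_\eps(A,\sigma)^2 \subset W(K)$, we have $x_\eps^2\in W(K)$ with $(\tld{\sign}_P^\eta(x_\eps))^2 = \sign_P(x_\eps^2)$. The classical bound $|\sign_P(q)| \le \dim q$ applied to $x_0$ and to $x_\eps^2$ then produces a uniform bound on $|\tld{\sign}_P^\eta(x)|$ independent of $(P,\eta)$, so the image is finite.

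Combining the two facts: with only finitely many fibers nonempty and each fiber closed, the complement of any given fiber is a finite union of closed sets, hence closed, so every fiber is clopen. The main obstacle is the finiteness of the image; the crucial input is the algebraic fact that the square of any element of $W_\eps(A,\sigma)$ lies in $W(K)$, which lets us reduce bounding the signatures of the hermitian components back to the classical boundedness of signatures on $W(K)$.
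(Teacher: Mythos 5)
Your proof takes the same core approach as the paper --- recognizing that $\tld{\sign}(x)^{-1}(n) = V(x - n\fdiag{1}) \cap \tld{X}(A,\sigma)$ is Zariski-closed because each $\tld{\sign}_P^\eta$ is a ring morphism --- but you are more careful about why closedness of the fibers actually yields continuity to the \emph{discrete} target $\Z$: one also needs each fiber to be open. The paper's proof stops once closedness is established, implicitly leaving the rest to the reader, whereas you explicitly supply the missing ingredient by showing the image of $\tld{\sign}(x)$ is finite: you decompose $x = x_0 + x_+ + x_-$, use additivity to write $\tld{\sign}_P^\eta(x) = \sign_P(x_0) + \tld{\sign}_P^\eta(x_+) + \tld{\sign}_P^\eta(x_-)$, observe $(\tld{\sign}_P^\eta(x_\eps))^2 = \sign_P(x_\eps^2)$ via $W_\eps(A,\sigma)^2 \subset W(K)$, and bound all three terms uniformly over $(P,\eta)$ by the dimension of a fixed representative of $x_0$ and of $x_\eps^2$. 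With finite image, the complement of any fiber is a finite union of closed sets, hence closed, so each fiber is clopen and the map is continuous. Your argument is correct, uses exactly the right algebraic input ($W_\eps$-squares landing in $W(K)$), and is a strictly more complete version of the paper's terse proof.
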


\begin{proof}
  By definition, $\tld{\sign}(x)^{-1}(\{n\})$ is the intersection
  of $\tld{X}(A,\sigma)$ with the Zariski-closed set $V(x-n\fdiag{1})$
  in $\Spec(\tld{W}(A,\sigma))$, so it is closed in $\tld{X}(A,\sigma)$.
\end{proof}





  

\section{Polarizations}\label{sec_polar}

One on the main goals in \cite{AU} and \cite{AU2} can be interpreted
as the definition of an appropriate total signature that is defined
on $X(K)$ instead of $\tld{X}(A,\sigma)$ (this is what they call
$\mathcal{M}$-signatures and $H$-signatures).

\begin{defi}
  Let $(A,\sigma)$ be an algebra with involution over $K$.
  If $U$ is an open subset of $X(K)$, a local polarization
  of $(A,\sigma)$ over $U$ is a set-theoretical section of
  $\bar{\pi}$ on $U$. We write $\Pol_U(A,\sigma)$ for the
  set of local polarizations over $U$. If $s\in \Pol_U(A,\sigma)$,
  we say that $-s\in \Pol_U(A,\sigma)$, such that $-s(P)\neq s(P)$
  for all $P\in U$, is the opposite (local) polarization of $s$.

  When $U=X(K)$ (resp. $X_+(A)$, $X_-(A)$), we speak of a
  global (resp. orthogonal, symplectic) polarization of
  $(A,\sigma)$, and the set of those is denoted by $\Pol(A,\sigma)$
  (resp. $\Pol_+(A,\sigma)$, $\Pol_-(A,\sigma)$). A global
  polarization is also simply called a polarization.

  If $s\in \Pol(A,\sigma)$, then for any $x\in \tld{W}(A,\sigma)$,
  the total signature of $x$ relative to $s$ is
  \[ \tld{\sign}^s(x): X(K) \xrightarrow{s} \tld{X}(A,\sigma)\xrightarrow{\tld{\sign}(x)} \Z. \]
  We also write $\tld{\sign}_P^s(x) = \tld{\sign}^s(x)(P)$.
\end{defi}

Clearly $\Pol(A,\sigma)\simeq \Pol_+(A,\sigma)\times \Pol_-(A,\sigma)$.
The way we see things is that a polarization is the choice
of a labelling of $\tld{\sign}_P^+$ and $\tld{\sign}_P^-$,
and an orthogonal (resp. symplectic) polarization is such a
choice for only the $P\in X_+(A)$ (resp. $X_-(A)$).
The way we defined the signature maps shows that a choice
of polarization is also equivalent to a choice of Morita
equivalence between $(A_{K_P},\sigma_{K_P})$ and $(D_P,\theta_P)$
for all $P\in X(K)$, but the global structure of $\tld{X}(A,\sigma)$
makes it much more convenient to discuss polarizations.
Our goal is to find relevant natural classes of polarizations, or
even ideally natural polarizations on various $(A,\sigma)$.

\begin{rem}
  The notion of $\mathcal{M}$-signature in \cite{AU} corresponds to
  an arbitrary (orthogonal/symplectic) polarization.
\end{rem}

\begin{rem}
  For any polarization $s$, and any $x\in W(K)$, $\tld{\sign}^s(x)$
  is the classical total signature $\sign(x): X(K)\to \Z$.
\end{rem}

There are natural symmetries of polarizations that we want
to emphasize. Let $G$ be the group of set-theoretical
automorphisms of $\bar{\pi}$, and $G_c$ (for \emph{continuous})
the group of topological automorphisms of $\bar{\pi}$. Then
$G$ can be identified with the multiplicative group
$\mathcal{F}(X(K),\{\pm 1\})$: a function $f:X(K)\to \{\pm 1\}$
acts by swapping the elements of the fiber above
$P\in X(K)$ iff $f(P)=-1$. This is also naturally
isomorphic to the group $(\mathcal{P}(X(K)),\Delta)$ of
subsets of $X(K)$ with the symmetric difference,
is we associate $f$ to $f^{-1}(\{-1\})$. Then $G_c\subset G$
corresponds to the continuous functions inside
$\mathcal{F}(X(K),\{\pm 1\})$, and to the clopen subsets
inside $\mathcal{P}(X(K))$. Clearly $G$ acts, simply transitively,
on $\Pol(A,\sigma)$. If $f\in \mathcal{F}(X(K),\{\pm 1\})$,
then its action (as an element of $G$) on the total signatures is
$\tld{\sign}^{f\cdot s} = f\tld{\sign}^s$.
The function $s\mapsto -s$ corresponds to the constant function
$-1$ in $\mathcal{F}(X(K),\{\pm 1\})$, and to $X(K)\in \mathcal{P}(X(K))$.

Note that since $\bar{\pi}$ is the application of the functor
$\Spec_0$ to the inclusion $W(K)\to \tld{W}(A,\sigma)$, there
is a canonical embedding of the $W(K)$-algebra automorphisms
of $\tld{W}(A,\sigma)$ in $G_c$. We call $G_a$ (for \emph{algebraic})
the image of the embedding. The image of the subgroup of
standard automorphisms is denoted $G_s$ (for \emph{standard}).
This action can also be deduced from the fact that by
construction, $(A,\sigma)\mapsto \tld{X}(A,\sigma)$
defines a functor from $\CBrh$ to the category of sets
above $X(K)$. Note that $G_s$ is naturally a quotient of
$K^*$ since standard automorphisms have the form $(\fdiag{a}_\sigma)_*$
for some $a\in K^*$.

We then have
\[ G_s\subset G_a\subset G_c\subset G, \]
and if we cannot find a canonical element of $\Pol(A,\sigma)$ for
an arbritrary $(A,\sigma)$ we can at least try to find
canonical classes in $\Pol(A,\sigma)/H$ for those various
subgroups $H\subset G$ (of course $\Pol(A,\sigma)/G=\{\ast\}$),
or maybe at least in $\Pol_\eps(A,\sigma)/H$ for some $\eps$.

\begin{rem}
  Note that by functoriality with respect to $\CBrh$,
  $\Pol(A,\sigma)/G_s$ only depends on the Brauer class $[A]$.
\end{rem}

The topological nature of our spaces makes it very natural to
investigate the following class:

\begin{defi}
  We write $\Pol^c(A,\sigma)$ for the set of continuous polarizations
  on $(A,\sigma)$, that is continuous sections of $\bar{\pi}$.
  Likewise, we have $\Pol^c_+(A,\sigma)$ and $\Pol^c_-(A,\sigma)$,
  so that $\Pol^c(A,\sigma)\simeq \Pol^c_+(A,\sigma)\times \Pol^c_-(A,\sigma)$.
\end{defi}

\begin{rem}
  By construction, a polarization is the same as a set-theoretic section
  of $\pi:\Spec(\tld{W}(A,\sigma))\to \Spec(W(K))$ that is compatible
  with the specialization of points. Then a continuous polarization
  is the same as a continuous section of $\pi$.
\end{rem}

\begin{prop}
  Let $(A,\sigma)$ be an algebra with involution over $K$. A polarization
  $s\in \Pol(A,\sigma)$ is continuous iff for all $x\in \tld{W}(A,\sigma)$,
  the total signature $\tld{\sign}^s(x)$ relative to $s$ is continuous
  on $X(K)$.
\end{prop}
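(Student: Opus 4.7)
The plan is to prove the two directions of the equivalence separately, with the forward direction being immediate and the reverse direction requiring an argument via the basis of the Zariski topology.

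For the forward direction, assume $s$ is continuous. Then by definition
\[ \tld{\sign}^s(x) = \tld{\sign}(x)\circ s, \]
and Proposition \ref{prop_tot_sign_cont} tells us that $\tld{\sign}(x)$ is continuous on $\tld{X}(A,\sigma)$. Thus $\tld{\sign}^s(x)$ is continuous on $X(K)$ as a composition of continuous maps.

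For the reverse direction, I assume that all total signatures $\tld{\sign}^s(x)$ are continuous functions $X(K)\to \Z$ (with $\Z$ endowed with its discrete topology). The strategy is to check continuity of $s$ by showing that preimages of a basis of the Zariski topology on $\tld{X}(A,\sigma)$ are open in $X(K)$. The principal open sets $D(x)\cap \tld{X}(A,\sigma)$ for $x\in \tld{W}(A,\sigma)$ form such a basis. The key computation is that, for each $P\in X(K)$, we have $s(P)\in D(x)$ iff $x\notin s(P)=I_P^{s(P)}$, iff $\tld{\sign}_P^{s(P)}(x)\neq 0$, iff $\tld{\sign}^s(x)(P)\neq 0$. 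Hence
\[ s^{-1}\bigl(D(x)\cap \tld{X}(A,\sigma)\bigr) = \bigl(\tld{\sign}^s(x)\bigr)^{-1}\bigl(\Z\setminus\{0\}\bigr), \]
which is open in $X(K)$ by the continuity hypothesis, since $\{0\}$ is closed in $\Z$.

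The main conceptual point is that the Zariski topology on $\tld{X}(A,\sigma)$ is generated, via the principal opens $D(x)$, by exactly the ``nonvanishing loci'' of the total signatures, so continuity of $s$ translates directly into the continuity of all pullbacks $\tld{\sign}^s(x)$. There is no serious obstacle, only the slightly unusual step of verifying explicitly that a principal open in the Zariski spectrum, when restricted to the closed points of characteristic zero considered here, is described by nonvanishing of the reduction map, and observing that the nonvanishing locus in $\Z$ is open.
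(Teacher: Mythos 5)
Your proof is correct and follows essentially the same approach as the paper: the forward direction by composing with the continuous $\tld{\sign}(x)$, and the reverse direction by observing that $s^{-1}(D(x)\cap\tld{X}(A,\sigma))$ is the nonvanishing locus of $\tld{\sign}^s(x)$, which is open by hypothesis, and that the $D(x)\cap\tld{X}(A,\sigma)$ form a basis.
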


\begin{proof}
  Since the absolute total signature $\tld{\sign}(x)$ is continuous
  on $\tld{X}(A,\sigma)$ (proposition \ref{prop_tot_sign_cont}),
  clearly if $s$ is a continuous section of $\bar{\pi}$ then the
  composition $\tld{\sign}^s(x)$ is also continuous.

  Conversely, assume all $\tld{\sign}^s(x)$ are continuous on
  $\tld{X}(A,\sigma)$. Let $D(x)\subset \Spec(\tld{W}(A,\sigma))$
  be the open subset defined by $x$ (ie the open subscheme defined by
  the localization at $x$). By construction, $D_0(x):=D(x)\cap \tld{X}(A,\sigma)$
  is the subset on which $\tld{\sign}(x)$ takes non-zero values,
  so $s^{-1}(D_0(x))$ is the subset of $X(K)$ on which $\tld{\sign}^s(x)$
  takes non-zero values. By hypothesis, it is open in $X(K)$.
  Since the $D_0(x)$ form an open basis of $\tld{X}(A,\sigma)$,
  this means that $s$ is continuous.
\end{proof}

It follows from the definition of $G_c$ that if $\Pol^c(A,\sigma)$
is not empty, then it is a simply transitive $G_c$-set, so it
defines a class in $\Pol(A,\sigma)/G_c$. Thus we just need to
know whether there is one continuous polarization to find
all of them. This is strongly related to the study of $H$-signatures
in \cite{AU2}, as we will now investigate.

\begin{defi}
  Let $(A,\sigma)$ be an algebra with involution over $K$. If
  $x\in \tld{W}(A,\sigma)$, we write
  \[ U(x) = \ens{P\in X(K)}{\tld{\sign}_P^+(x)\neq \tld{\sign}_P^-(x)}. \]
  We call $U(x)$ the principal subset of $X(K)$ defined by $x$.

  We also define $s_x\in \Pol_{U(x)}(A,\sigma)$, called the principal
  local polarization defined by $x$, as the unique local polarization
  such that $\tld{\sign}^{s_x}_P(x) > \tld{\sign}^{-s_x}_P(x)$ for
  all $P\in U(x)$.
\end{defi}

\begin{prop}
  Let $(A,\sigma)$ be an algebra with involution over $K$ Then for any
  $x\in \tld{W}(A,\sigma)$, $U(x)$ is a clopen subset of $X(K)$,
  and $s_x: U(x)\to \tld{X}(A,\sigma)$ is a continuous local polarization of
  $(A,\sigma)$ over $U(x)$. 
\end{prop}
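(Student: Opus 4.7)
The plan is to isolate the ``hermitian part'' of $x$, express $U(x)$ via an absolute signature living on $X(K)$, and realize $s_x$ as the inverse of a continuous open bijection between explicit clopen subsets.

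Write $x=x_0+x_1$ with $x_0\in W(K)$ and $x_1\in W(A,\sigma)\oplus W^{-1}(A,\sigma)$. By proposition \ref{prop_signatures}, $\tld{\sign}_P^+$ and $\tld{\sign}_P^-$ agree on $W(K)$ and are opposite on $W(A,\sigma)\oplus W^{-1}(A,\sigma)$, so
\[ \tld{\sign}_P^+(x)-\tld{\sign}_P^-(x)=2\tld{\sign}_P^+(x_1), \]
and since $\tld{\sign}_P^\eta(x_1)^2=\sign_P(x_1^2)$ is independent of $\eta$ (as $x_1^2\in W(K)$), we get
\[ U(x)=\ens{P\in X(K)}{\sign_P(x_1^2)\neq 0}. \]
The continuous function $\sign(x_1^2):X(K)\to\Z$ has finite image (bounded by $\dim x_1^2$) and closed fibers by the classical analogue of proposition \ref{prop_tot_sign_cont}; a finite partition of $X(K)$ into closed sets forces each fiber to be clopen, so $U(x)$, being the complement of the zero fiber, is clopen.

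For continuity of $s_x$, apply the same finiteness argument to $\tld{\sign}(x_1):\tld{X}(A,\sigma)\to\Z$, which is continuous by proposition \ref{prop_tot_sign_cont} and has finite image since its square factors through $\sign(x_1^2)$. All of its fibers are then clopen, hence
\[ V=\tld{\sign}(x_1)^{-1}(\Z_{>0})\subset\tld{X}(A,\sigma) \]
is a finite union of clopen fibers, hence itself clopen. For each $P\in U(x)$, the two primes $I_P^\pm$ above $P$ satisfy $\tld{\sign}_P^+(x_1)=-\tld{\sign}_P^-(x_1)\neq 0$, so exactly one lies in $V$; by definition, $s_x(P)$ is the unique lift making $\tld{\sign}_P^{s_x(P)}(x_1)>0$, which is precisely this lift. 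Hence $\pi|_V:V\to U(x)$ is a continuous bijection whose inverse is $s_x$.

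To conclude, invoke the earlier lemma that $\pi:\tld{X}(A,\sigma)\to X(K)$ is open: its restriction to the open set $V$ remains open, so $\pi|_V$ is a continuous open bijection, hence a homeomorphism, and $s_x=(\pi|_V)^{-1}$ is continuous. The only mild obstacle is the bookkeeping translating the defining inequality $\tld{\sign}_P^{s_x(P)}(x)>\tld{\sign}_P^{-s_x(P)}(x)$ into the cleaner form $\tld{\sign}_P^{s_x(P)}(x_1)>0$, which is exactly what identifies $s_x(P)$ with the unique prime of $V$ above $P$.
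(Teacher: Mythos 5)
Your proof is correct, and it takes a genuinely different route from the paper's. The paper works directly on $\tld{X}(A,\sigma)$: it introduces the fiber-swapping involution $\tau$ (induced by the standard automorphism $\fdiag{-1}_\sigma$), defines the continuous map $f=(\tld{\sign}(x),\tld{\sign}(x)\circ\tau):\tld{X}(A,\sigma)\to\Z^2$, and then realizes $U(x)=\pi(f^{-1}(S))$ and $\Ima(s_x)=f^{-1}(S^+)$ for $S$ the off-diagonal and $S^+$ the strict-upper-diagonal subset of the discrete space $\Z^2$; clopenness of $U(x)$ and continuity of $s_x$ are then deduced from the fact that $\pi$ is both open and closed, the latter relying on compactness of $\tld{X}(A,\sigma)$. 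By contrast you project the problem down to $X(K)$: you isolate the hermitian part $x_1$, observe that $U(x)=\ens{P}{\sign_P(x_1^2)\neq 0}$ because $\tld\sign_P^+(x)-\tld\sign_P^-(x)=2\tld\sign_P^+(x_1)$ and $\tld\sign_P^\eta(x_1)^2=\sign_P(x_1^2)$, and then use the finite-image argument (a continuous integer-valued map with finite image has clopen fibers) rather than compactness. For continuity of $s_x$, you identify $s_x$ as the inverse of $\pi$ restricted to the clopen set $V=\tld{\sign}(x_1)^{-1}(\Z_{>0})$, invoking only the openness lemma for $\pi$.

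What each approach buys: the paper's is shorter and stays at the level of the canonical objects $\tau$ and $\tld\sign(x)$. Yours is arguably more self-contained, because the paper's appeal to compactness of $\tld{X}(A,\sigma)$ is nowhere established prior to this proposition (it follows from the later theorem $\tld{X}(A,\sigma)\approx X(K)\coprod X(K)$, whose proof in turn cites this proposition), whereas you bypass compactness entirely via the finite-image argument and the already-proved openness of $\pi$. The decomposition $x=x_0+x_1$ and the reduction to $\sign(x_1^2)$ also makes transparent why $U(x)$ depends only on the hermitian component of $x$.
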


\begin{proof}
  Let $\tau: \tld{X}(A,\sigma)\to \tld{X}(A,\sigma)$ be the function
  that swaps the elements of every fiber of $\pi$. It is continuous,
  for instance because it is induced by the standard automorphism defined
  by $\fdiag{-1}_\sigma$. We define $f: \tld{X}(A,\sigma)\to \Z^2$
  by $f= (\tld{\sign}(x),\tld{\sign}(x)\circ \tau)$, and
  \[ S = \ens{(m,n)\in \Z^2}{m\neq n},\quad S^+ = \ens{(m,n)\in \Z^2}{m> n}. \]

  Then $U(x) = \pi(f^{-1}(S))$ and $\Ima(s_x)=f^{-1}(S^+)$, so $\Ima(s_x)$
  is closed in $\tld{X}(A,\sigma)$ and $U(x)$ is clopen in $X(K)$ (here
  we use the compacity of $\tld{X}(A,\sigma)$). Now if $Y$ is any
  closed set in $\tld{X}(A,\sigma)$, then $s_x^{-1}(Y) = \pi(Y\cap \Ima(s_x))$,
  so it is closed in $X(K)$, which shows that $s_x$ is continuous.
\end{proof}

Then we interpret the results in \cite{AU2} as:

\begin{thm}\label{thm_polar}
  Let $(A,\sigma)$ be an algebra with involution over $K$. Then for any
  $x_1,\dots,x_n\in W_\eps(A)$, there exists $x\in W_\eps(A,\sigma)$ such that
  $U(x_1)\cup \dots \cup U(x_n) = U(x)$. In particular,
  there exists $x\in W_\eps(A,\sigma)$ such that $U(x)=X_\eps(A)$, so there exist
  global continuous polarizations, and $\tld{X}(A,\sigma)\approx X(K)\coprod X(K)$
  as topological spaces, with $\bar{\pi}$ being the canonical projection.

  Furthermore, the class of global principal polarizations is a transitive
  $G_c$-set, thus it is exactly $\Pol^c(A,\sigma)$.
\end{thm}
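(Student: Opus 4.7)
The plan is to prove the three claims in sequence: first the union closure (which is the main technical point), then the reference form existence and double-cover structure, then the identification of principal polarizations with $\Pol^c(A,\sigma)$.

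\textbf{Step 1: union closure via a dominance argument.} The crucial input I would establish first is that for any $y\in W_\eps(A,\sigma)$, the signature $|\tld{\sign}_P^+(y)|$ is bounded by a constant $d(y)$ independent of $P$---concretely, by the reduced dimension of any representative of $y$, which one sees after base change to $K_P$ and Morita transfer to $(K_P,\Id)$ or $(\mathbb{H}_{K_P},\gamma)$, where signatures of the underlying quadratic forms are bounded by the rank. Given $x_1,\dots,x_n\in W_\eps(A,\sigma)$ with bounds $d_i$, I choose integers $M_1<M_2<\dots<M_n$ with $M_{j+1}>\sum_{i\leq j}M_id_i$ and set $x=\sum_i M_ix_i\in W_\eps(A,\sigma)$. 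At each $P$, the signature $\tld{\sign}_P^+(x)=\sum_i M_i\tld{\sign}_P^+(x_i)$ is dominated in absolute value by the term of largest index $j$ with $\tld{\sign}_P^+(x_j)\neq 0$, so the sum is nonzero iff some term is, i.e.\ $U(x)=\bigcup_i U(x_i)$.

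\textbf{Step 2: reference form and trivialization of the double cover.} By Lemma \ref{lem_h_local}, for each $P\in X_\eps(A)$ there is $h_P\in W_\eps(A,\sigma)$ with $P\in U(h_P)$. The subset $X_\eps(A)\subset X(K)$ is clopen (the splitting behaviour of $A$ over a real closure is locally constant in $P$), hence compact, so I extract a finite subcover of $\{U(h_P)\}$ and apply Step 1 to obtain $x_\eps\in W_\eps(A,\sigma)$ with $U(x_\eps)=X_\eps(A)$. The element $x=x_++x_-\in \tld{W}(A,\sigma)$ then satisfies $U(x)=X(K)$, making $s_x: X(K)\to \tld{X}(A,\sigma)$ a global continuous section of $\bar\pi$; together with $-s_x$, this yields the required homeomorphism $\tld{X}(A,\sigma)\approx X(K)\coprod X(K)$ with $\bar\pi$ the canonical projection.

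\textbf{Step 3: global principal polarizations exhaust $\Pol^c(A,\sigma)$.} Each global principal polarization $s_x$ (with $U(x)=X(K)$) is continuous, so they form a subset of $\Pol^c(A,\sigma)$, which is itself a simply-transitive $G_c$-torsor once nonempty. It remains to show that the principal polarizations form a $G_c$-torsor. Given two, $s_x$ and $s_y$, the function $P\mapsto \sign(\tld{\sign}_P^+(x_\eps)\tld{\sign}_P^+(y_\eps))$ (with $\eps$ determined by $P$) is locally constant because the signatures are integer-valued and continuous on the compact space $\tld X(A,\sigma)$, so $s_x$ and $s_y$ differ by an element of $G_c$. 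Conversely, given $g\in G_c$ corresponding to a clopen $V\subset X(K)$, I realize the sign pattern as $\sign_P(\alpha)=g(P)$ for some $\alpha\in W(K)$, using the Harrison description of clopens (basic clopens are $H(a)=\{P:\sign_P(a)=1\}$, realized by $\fdiag{a}$, and general clopens are finite boolean combinations thereof). Then $\alpha x$ is a reference form with $s_{\alpha x}=g\cdot s_x$.

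The main obstacle is Step 1: identifying the right construction (a $\Z$-linear combination with rapidly growing coefficients) and justifying the uniform boundedness of signatures via reduced dimension. Step 2 is then a routine compactness argument, and Step 3 rests on a standard realization statement for clopens of $X(K)$ coming from the Harrison topology.
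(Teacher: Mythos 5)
Your proposal takes a genuinely different route from the paper's: the paper defers the two key technical facts (existence of a common reference form, and transitivity of $G_c$ on principal polarizations) to \cite[3.1, 3.3]{AU2}, whereas you supply direct arguments. Steps 1 and 2 are correct. The rapidly-growing-coefficient construction $x=\sum_i M_i x_i$ with $M_j > \sum_{i<j} M_i d_i$, together with the uniform bound $|\tld{\sign}_P^+(x_i)|\leq d_i$ from the (reduced) dimension of a representative, does give $U(x)=\bigcup U(x_i)$, and the compactness argument over the clopen set $X_\eps(A)$ then produces the reference form. (The clopenness of $X_\eps(A)$ is, as you say, easy: lemma \ref{lem_h_local} shows both $X_1(A)$ and $X_{-1}(A)$ are unions of sets $U(h)$, hence open and complementary.)

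Step 3, however, has a gap. You assert that a clopen $V\subset X(K)$ corresponding to $g\in G_c$ can be realized by $\alpha\in W(K)$ with $\sign_P(\alpha)=g(P)$, i.e.\ with total signature taking exactly the values $\pm 1$, via the boolean structure of clopens. This does not follow from the boolean-combination description: the characteristic function of an intersection $H(a)\cap H(b)$ requires division by $2$, which $W(K)$ does not permit, and the image of the total signature map is in general a proper (finite-$2$-power-index) subgroup of the relevant space of continuous $\Z$-valued functions. The good news is that your idea works after a small correction, because only the \emph{sign} of $\sign_P(\alpha)$ (not its value) enters the comparison $\tld{\sign}_P^\eta(\alpha x)=\sign_P(\alpha)\,\tld{\sign}_P^\eta(x)$ that determines $s_{\alpha x}$. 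So it suffices to produce $\alpha$ with $\sign_P(\alpha)>0$ on $V$, $\sign_P(\alpha)<0$ on $X(K)\setminus V$, and nowhere zero. This is achievable: by compactness $V$ is a finite boolean combination of basic clopens, $2^m\chi_{H(a_1,\dots,a_m)}=\sign(\pfis{-a_1,\dots,-a_m})$, and inclusion--exclusion then realizes $2^N\chi_V$ as a total signature for $N$ large; take $\alpha$ with $\sign(\alpha)=2^N(2\chi_V-1)$, which is nowhere zero with the required sign pattern. With this replacement, $s_{\alpha x}=g\cdot s_x$ has $U(\alpha x)=U(x)=X(K)$, and Step 3 closes. (Your verification that any two principal polarizations differ by $G_c$ is superfluous: both lie in $\Pol^c(A,\sigma)$, on which $G_c$ already acts simply transitively.)
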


\begin{proof}
  The existence of $x\in W_\eps(A,\sigma)$ such that $\bigcup U(x_i) = U(x)$ is a
  reformulation of \cite[3.1]{AU2}. The existence of $x\in W_\eps(A,\sigma)$ such that
  $U(x)=X_\eps(A)$ follows by compacity, since lemma \ref{lem_h_local} shows
  that the $U(x_i)$ form an open cover of $X_\eps(A)$. Since $\Pol^c_+(A,\sigma)$
  and $\Pol^c_-(A,\sigma)$ are non-empty, so is $\Pol^c(A,\sigma)$. If
  $s\in \Pol(A,\sigma)$, then $\tld{X}(A,\sigma) = \Ima(s)\coprod \Ima(-s)$,
  and if $s$ is continuous, $\Ima(s)$ and $\Ima(-s)$ are homeomorphic to $X(K)$.

  The fact that the principal polarizations are a transitive $G_c$-set is a
  reformulation of \cite[3.3]{AU2}. Since they are included in $\Pol^c(A,\sigma)$
  and $\Pol^c(A,\sigma)$ is also a transitive $G_c$-set, we can conclude.
\end{proof}

\begin{rem}
  With this framework, the weaker lemma \ref{lem_h_local} simply states that the
  $U(x)$ with $x\in W_\eps(A,\sigma)$ form a open cover of $X_\eps(A)$, whichs
  shows that $\tld{X}(A,\sigma)$ is a double cover of $X(K)$, since
  it has local trivialization. The notion of $H$-signatures in \cite{AU}
  corresponds to taking a finite open cover of $X_\eps(A)$ by principal subsets
  (which exists by compacity). In general, $H$-signatures correspond to
  continuous polarizations.
\end{rem}

Given that $\Spec(\tld{W}(A,\sigma))$ is not only a topological space
but a scheme, we also have another natural class of polarizations:
if $\rho: \tld{W}(A,\sigma)\to W(K)$ is a retraction (see definition
\ref{def_retraction}), then applying the $\Spec$ functor gives a
scheme morphism $\rho^*: \Spec(W(K))\to \Spec(\tld{W}(A,\sigma))$,
so in particular a continuous polarization. We call polarizations
of this form \emph{algebraic polarizations} of $(A,\sigma)$, and
we write $\Pol^a(A,\sigma)$. Similarly, orthogonal (resp. symplectic)
retractions define the set $\Pol^a_+(A,\sigma)$ (resp. $\Pol^a_-(A,\sigma)$)
of algebraic orthogonal (resp. symplectic) polarizations of $(A,\sigma)$.
There is an obvious injection $\Pol^a(A,\sigma)\subset \Pol^a_+(A,\sigma)\times \Pol^a_-(A,\sigma)$,
but it is not clear if it is surjective in general.
The existence of an algebraic (global, orthogonal or symplectic) polarization
of $(A,\sigma)$ obviously depends only on the Brauer class $[A]$.

\begin{rem}
  By construction, $G_a$ acts on $\Pol^a_+(A,\sigma)$ and $\Pol^a_-(A,\sigma)$,
  and it acts transitively on $\Pol^a(A,\sigma)$. In particular,
  there is a well-defined ``algebraic'' element in $\Pol(A,\sigma)/G_a$,
  and the sets $\Pol^a(A,\sigma)/G_s$, $\Pol_+^a(A,\sigma)/G_s$ and
  $\Pol_-^a(A,\sigma)/G_s$ are well-defined, and they only depend on
  the Brauer class $[A]$. 
\end{rem}

\begin{ex}
  If $A$ is split, there are algebraic polarizations of $(A,\sigma)$,
  by example \ref{ex_retrac_ortho}.
\end{ex}

\begin{ex}
  According to proposition \ref{prop_retrac_sympl}, there is always a canonical
 algebraic symplectic
  polarization of $(\mathbf{H}_K,\gamma)$, and if $K$ is Pythagorean
  there is a canonical global algebraic polarization. On the
  other hand, $(\mathbf{H}_K,\gamma)$ does not have algebraic
  polarizations if the Pythagoras number of $K$ is at least 3.
\end{ex}

We do not know of any other cases where algebraic polarizations
exist, and it would be interesting to characterize the Brauer classes
for which it is the case.

\bibliographystyle{plain}
\bibliography{signature_first_kind}

\end{document}